\DeclareMathAlphabet{\curly}{U}{rsfs}{m}{n}
\DeclareMathOperator{\PSL}{PSL}
\DeclareMathOperator{\SL}{SL}
\theoremstyle{definition}
\newtheorem*{df}{Definition}
\theoremstyle{remark}
\newtheorem{proposition}{Proposition}[section]
\def\1{\mathbf{1}}
\theoremstyle{remark}
\theoremstyle{plain}
\newtheorem{theorem}[proposition]{Theorem}
\newtheorem{cor}[proposition]{Corollary}
\newtheorem{defn}[proposition]{Definition}
\newtheorem{lemma}[proposition]{Lemma}
\def\C{\mathbf{C}}
\def\Q{\mathbf{Q}}
\def\Q{\mathbf{Q}}
\def\Z{\mathbf{Z}}
\def\1{\mathbf{1}}
\title{Constructing multi-cusped hyperbolic manifolds that are isospectral and not isometric}
\author{Benjamin Linowitz}
\address{Department of Mathematics\\ 
10 North Professor Street\\
Oberlin, OH 44074}
\email{benjamin.linowitz@oberlin.edu}
\begin{document}

\begin{abstract} 
In a recent paper Garoufalidis and Reid constructed pairs of $1$-cusped hyperbolic $3$-manifolds which are isospectral but not isometric. In this paper we extend this work to the multi-cusped setting by constructing isospectral but not isometric hyperbolic $3$-manifolds with arbitrarily many cusps. The manifolds we construct have the same Eisenstein series, the same infinite discrete spectrum and the same complex length spectrum. Our construction makes crucial use of Sunada's method and the Strong Approximation Theorem of Nori and Weisfeiler.
\end{abstract}

\maketitle 


\section{Introduction}\label{section:introduction}

In 1966 Kac \cite{Kac} famously asked ``Can one hear the shape of a drum?'' In other words, can one deduce the shape of a planar domain given knowledge of the frequencies at which it resonates? Long before Kac had posed his question mathematicians had considered analogous problems in more general settings and sought to determine the extent to which the geometry and topology of a Riemannian manifold is determined by its Laplace eigenvalue spectrum.

Early constructions of isospectral non-isometric manifolds include $16$-dimensional flat tori (Milnor \cite{Milnor}), compact Riemann surfaces (Vign{\'e}ras \cite{vigneras}) and lens spaces (Ikeda \cite{ikeda}). For an excellent survey of the long history of the construction of isospectral non-isometric manifolds we refer the reader to \cite{gordonsurvey1}.

In this paper we consider a problem posed by Gordon, Perry and Schueth \cite[Problem 1.2]{gordonsurvey2}: to construct complete, non-compact manifolds that are isospectral and non-isometric. This problem has received a great deal of attention in the case of surfaces. For example, Brooks and Davidovich \cite{Brooks} were able to use Sunada's method \cite{Sunada} in order to construct a number of examples of isospectral non-isometric hyperbolic $2$-orbifolds. For more examples, see \cite{gordonsurvey2}.

In a recent paper Garoufalidis and Reid \cite{GR} constructed the first known examples of isospectral non-isometric  $1$-cusped hyperbolic $3$-manifolds. The main result of this paper extends the work of Garoufalidis and Reid to the multi-cusped setting.

\begin{theorem}\label{maintheorem}
There exist finite volume orientable $n$-cusped hyperbolic $3$-manifolds that are isopectral and not isometric for arbitrarily large positive integers $n$.
\end{theorem}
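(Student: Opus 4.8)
The plan is to exhibit the two manifolds as Sunada mates of a common cusped arithmetic cover, installing the classical exceptional Gassmann triple inside a congruence quotient and obtaining many cusps from the base manifold rather than from the Sunada group. For the algebraic input I would use the exceptional isomorphism $\PSL(2,7)\cong\GL(3,\bfF_2)=\PSL(3,2)$: set $G=\PSL(2,7)$ and let $H_1$ and $H_2$ be the stabilizers of a point and of a line of the Fano plane. Both are isomorphic to $S_4$, have index $7$, and are Gassmann equivalent — their permutation characters agree because every element of $\GL(3,\bfF_2)$ fixes as many points as lines — yet they are not conjugate in $G$. This is the standard arithmetically equivalent triple.

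Next I would realize $G$ as a congruence quotient of a cusped arithmetic lattice. Fix an imaginary quadratic field $k$ in which $7$ splits and a prime $\mathfrak q\mid 7$ with residue field $\bfF_7$, so that reduction modulo $\mathfrak q$ gives $\PSL_2(\mathcal O_k/\mathfrak q)\cong G$. Let $\Gamma=\Gamma(\mathfrak n)$ be a torsion-free principal congruence subgroup of the Bianchi group $\PSL_2(\mathcal O_k)$, with $\mathfrak n$ coprime to $\mathfrak q$; then $X=\Gamma\backslash\bfH^3$ is an orientable finite-volume cusped hyperbolic $3$-manifold. The Strong Approximation Theorem of Nori and Weisfeiler is exactly the tool guaranteeing that the congruence reductions of the Zariski-dense group $\Gamma$ are simultaneously surjective; in particular, even after imposing the level $\mathfrak n$ used below to create cusps, the reduction $\varphi\colon\Gamma\twoheadrightarrow G$ remains onto. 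Setting $\Gamma_i=\varphi^{-1}(H_i)$ and $X_i=\Gamma_i\backslash\bfH^3$, each $X_i$ is a torsion-free orientable finite-volume cusped hyperbolic $3$-manifold, and $X_1,X_2$ share the common $G$-cover $\Gamma_0\backslash\bfH^3$, where $\Gamma_0=\ker\varphi$.

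I would then read off the spectral and cusp data. In the strengthened, ``representation equivalent'' form of Sunada's method, Gassmann equivalence of $H_1,H_2$ forces $X_1$ and $X_2$ to be isospectral with the same Eisenstein series, the same discrete spectrum and the same complex length spectrum. The number of cusps of $X_i$ is $\sum_c\lvert H_i\backslash G/\overline P_c\rvert$, summed over the cusps $c$ of $X$, where $\overline P_c=\varphi(P_c)$ is the image of a peripheral subgroup. Since $g\mapsto\lvert\{x\in G/\overline P_c:g\cdot x=x\}\rvert$ is a class function on $G$, Burnside's lemma together with the Gassmann condition (equal intersections with every conjugacy class, equal order) gives $\lvert H_1\backslash G/\overline P_c\rvert=\lvert H_2\backslash G/\overline P_c\rvert$ for each $c$, so $X_1$ and $X_2$ have equally many cusps. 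To exceed any prescribed bound $n$ I would keep $G$ and the triple fixed and inflate the base: each base cusp contributes at least one cusp of $X_i$, and the number of cusps of $\Gamma(\mathfrak n)\backslash\bfH^3$ grows without bound as $N(\mathfrak n)\to\infty$, so a sufficiently deep level $\mathfrak n$ (coprime to $\mathfrak q$) yields a pair with more than $n$ cusps.

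The main obstacle is to prove $X_1$ and $X_2$ are not isometric. By Mostow--Prasad rigidity any isometry $X_1\to X_2$ lifts to $g\in\mathrm{Isom}(\bfH^3)=\PGL_2(\bfC)\rtimes\langle z\mapsto\bar z\rangle$ with $g\Gamma_1 g^{-1}=\Gamma_2$, and since $\Gamma$ is arithmetic $g$ lies in the \emph{dense} commensurator $\mathrm{Comm}(\Gamma)$, so non-conjugacy of $H_1,H_2$ in $G$ does not by itself suffice. The difficulty is sharpened by the fact that, although $H_1$ and $H_2$ are not conjugate in $G$, they are interchanged by the graph automorphism of $\PSL(3,2)\cong\PSL(2,7)$ dualizing points and lines; thus I must rule out that this duality is realized geometrically. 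Concretely I would show that any such conjugator $g$ must normalize $\Gamma_0$ and descend to $G$, and that the common cover $\Gamma_0\backslash\bfH^3$ admits no isometry inducing the outer automorphism of $G$ — equivalently that its isometry group induces only inner automorphisms. Pinning down the normalizer of $\Gamma_0$ inside $\mathrm{Comm}(\Gamma)$ is precisely where the arithmetic of the construction and the Strong Approximation Theorem are indispensable, and it is the delicate heart of the proof; once it is in place the non-isometry follows, while isospectrality and the cusp count are formal consequences of the Gassmann condition.
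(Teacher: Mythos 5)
Your proposal correctly reproduces the Sunada/Gassmann half of the argument (the two classes of index-$7$ subgroups $H_1,H_2\cong S_4$ in $\PSL(2,7)$, the cusp count via double cosets, equality of Eisenstein series and of the discrete spectrum), but the step you defer --- non-isometry --- is not a deferrable technicality: it is the entire difficulty, and for the arithmetic base you chose it is not merely unproven but \emph{false}. Take $g=\diag(-1,1)\in\GL(2,\mathcal O_k)$. Conjugation by $g$ sends $\begin{pmatrix} a & b\\ c& d\end{pmatrix}$ to $\begin{pmatrix} a & -b\\ -c& d\end{pmatrix}$, so $g$ normalizes every principal congruence subgroup $\Gamma(\mathfrak n)\leq \PSL(2,\mathcal O_k)$ and also the kernel $\Gamma_0$ of the reduction $\varphi\colon\Gamma(\mathfrak n)\to\PSL(2,\bfF_7)$. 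The automorphism of $\PSL(2,\bfF_7)$ it induces is conjugation by $\diag(6,1)\in\GL(2,\bfF_7)$, whose determinant $6$ is a nonsquare mod $7$; this is precisely the outer automorphism of $\PSL(2,7)$, which (as you yourself note, via point--line duality in the Fano plane) swaps the two conjugacy classes of $S_4$-subgroups. Hence $g\,\varphi^{-1}(H_1)\,g^{-1}=\varphi^{-1}(\theta(H_1))$ with $\theta(H_1)$ conjugate to $H_2$ in $\PSL(2,\bfF_7)$, and composing $g$ with a suitable element of $\Gamma(\mathfrak n)$ conjugates $\Gamma_1$ onto $\Gamma_2$ inside $\PSL(2,\bfC)$. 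So your $X_1$ and $X_2$ are isometric --- the ``hidden symmetry'' you hoped to exclude is realized by an explicit unit-determinant diagonal matrix, for every imaginary quadratic $k$ and every level $\mathfrak n$. No appeal to Strong Approximation can repair this, because the obstruction lives in the normalizer of $\Gamma_0$, exactly where you located the ``delicate heart'' of the argument.

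This failure is why the paper does \emph{not} work over an arithmetic lattice. It instead starts from a non-arithmetic $1$-cusped manifold $M=\bfH^3/\Gamma$ (the complement of the knot K11n116) which is the \emph{minimal} element of its commensurability class, i.e.\ $\Gamma=\mathrm{Comm}(\Gamma)$ (Goodman--Heard--Hodgson); by Margulis, non-arithmeticity makes the commensurator a lattice rather than dense. Then any isometry between the two Sunada covers is induced by some $g\in\mathrm{Comm}(\Gamma)=\Gamma$, which projects into $G=\PSL(2,7)$ and would conjugate $H_1$ to $H_2$ --- a contradiction that is now genuinely available, since conjugation by elements of $G$ is inner. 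The remaining work in the paper (the $p$-reps onto $\PSL(2,7)$ and $\PSL(2,11)$, Strong Approximation applied to congruence quotients of $\pi_1(M)\subset\PSL(2,\mathcal O_k)$, and the cusp bookkeeping of its Section \ref{section:cusps}) exists to manufacture, inside this rigid commensurability class, covers with arbitrarily many cusps on which the Gassmann quotient survives and every cusp remains a single cusp, so that the Eisenstein series comparison applies. In short: your spectral argument is fine, your cusp count is fine, but the non-isometry step requires replacing the arithmetic base by a non-arithmetic one equal to its own commensurator, and that choice is the actual content of the theorem.
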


Moreover, the manifolds we construct will be shown to have the same Eisenstein series, the same infinite discrete spectrum and the same complex length spectrum.

The author would like to thank Dubi Kelmer, Emilio Lauret, Ben McReynolds, Djordje Mili{\'c}evi{\'c}, Alan Reid and Ralf Spatzier for useful conversations concerning the material in this paper. The author is especially indebted to Jeff Meyer for his close reading of this paper and his many suggestions and comments. The work of the author is partially supported by NSF Grant Number DMS-1905437.

\section{Preliminaries}

Given a positive integer $d\geq 2$ we define ${\bf H}^d$ to be $d$-dimensional hyperbolic space, that is, the connected and simply connected Riemannian manifold of dimension $d$ having constant curvature $-1$. Let $\Gamma$ be a torsion-free discrete group of orientation preserving isometries of ${\bf H}^d$ such that the quotient space ${\bf H}^d/\Gamma$ has finite hyperbolic volume. Thus $M={\bf H}^d/\Gamma$ is a finite volume orientable hyperbolic $d$-manifold.

There exists a compact hyperbolic $d$-manifold $M'$ with boundary (possibly empty) such that the complement $M- M'$ consists of at most finitely many disjoint unbounded ends of finite volume, the {\it cusps} of $M$. Each cusp is homeomorphic to $N\times (0,\infty)$ where $N$ is a compact Euclidean $(d-1)$-manifold. 

Let $\Lambda$ denote the limit set of $\Gamma$ (i.e., the set of limit points of all the orbits of the action of $\Gamma$ on ${\bf H}^d$). A point $c\in\Lambda$ is called a {\it parabolic limit point} if it is the fixed point of some parabolic isometry $\gamma\in \Gamma$. The stabilizer $\Gamma_c < \Gamma$ of such a $c$ is called a {\it maximal parabolic subgroup} of $\Gamma$. A {\it cusp} of $\Gamma$ is a $\Gamma$-equivalence class of parabolic limit points and will be denoted by $[c]_\Gamma$. We will omit the subscript when the group is clear from context. The correspondence between cusps of $M$ and cusps of $\Gamma$ is given by the fact if $C$ is a cusp of $M$ then $C$ may be identified as $C=V_c/\Gamma_c$ where $V_c\subset {\bf H}^d$ is a precisely invariant horoball based at $c$ for some cusp $[c]$ of $\Gamma$.

\section{Spectrum of the Laplacian}

It is known that the space $L^2(M)$ has a decomposition \[L^2(M) = L^2_{disc}(M) \oplus L^2_{cont}(M)\] where $L^2_{disc}(M)$ corresponds to the discrete spectrum of the Laplacian on $M$ and $L^2_{cont}(M)$ corresponds to the continuous spectrum of $M$. The discrete spectrum of $M$ is a collection of eigenvalues $0\leq \lambda_1 \leq \lambda_2 \leq \cdots$ where each $\lambda_j$ occurs with a finite multiplicity. The continuous spectrum of $M$ is empty when $M$ is compact and otherwise is a union of finitely many intervals (one for each cusp of $M$) of the form \[\left[\frac{(d-1)^2}{4},\infty\right).\]

When $M$ is compact it is known that the discrete spectrum is infinite and obeys Weyl's Asymptotic Law. The precise analogue of Weyl's Asymptotic Law is in general not available when $M$ is not compact, though it is known in the case that $\Gamma$ is an arithmetic congruence group \cite{S, V77a, V79, V81}. 

The following elementary lemma will be useful in proving that certain manifolds have infinite discrete spectrum.

\begin{lemma}\label{finitecovers}
Let $M={\bf H}^d/\Gamma$ be a non-compact hyperbolic $d$-manifold and $M'={\bf H}^d /\Gamma'$ be a finite cover of $M$. If $M$ has an infinite discrete Laplace spectrum then so does $M'$.
\end{lemma}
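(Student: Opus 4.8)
The plan is to exploit the covering map directly. Since $M'$ is a finite cover of $M$, after conjugating we may assume $\Gamma'\le\Gamma$ with $n:=[\Gamma:\Gamma']<\infty$, and the natural projection $\pi\colon M'={\bf H}^d/\Gamma'\to{\bf H}^d/\Gamma=M$ is a degree-$n$ Riemannian covering, hence a local isometry. The idea is that pullback along $\pi$ should turn discrete eigenfunctions on $M$ into discrete eigenfunctions on $M'$ with the same eigenvalues, and that the finiteness of the cover is exactly what forces this pullback to preserve square-integrability.

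Concretely, I would work with the pullback map $\pi^*\colon\phi\mapsto\phi\circ\pi$. Because $\pi$ is a local isometry it intertwines the Laplace operators, so $\Delta_{M'}(\phi\circ\pi)=(\Delta_M\phi)\circ\pi$; in particular $\Delta_M\phi=\lambda\phi$ implies $\Delta_{M'}(\phi\circ\pi)=\lambda(\phi\circ\pi)$. The finiteness of the cover enters through the integration formula for an $n$-sheeted covering, namely $\int_{M'}(g\circ\pi)\,dV=n\int_M g\,dV$ for integrable $g$, which gives for all $\phi,\psi\in L^2(M)$
\[
\langle\phi\circ\pi,\psi\circ\pi\rangle_{L^2(M')}=n\,\langle\phi,\psi\rangle_{L^2(M)}.
\]
Thus $\pi^*$ is, up to the scalar $\sqrt n$, an isometric embedding $L^2(M)\hookrightarrow L^2(M')$ that carries $\lambda$-eigenfunctions to $\lambda$-eigenfunctions.

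Next I would invoke the hypothesis. An infinite discrete spectrum means $L^2_{disc}(M)$ is infinite dimensional, so I may select an orthonormal sequence $\phi_1,\phi_2,\dots$ of square-integrable Laplace eigenfunctions on $M$. By the displayed identity their pullbacks $\phi_1\circ\pi,\phi_2\circ\pi,\dots$ are mutually orthogonal of norm $\sqrt n$, hence linearly independent, and each is a genuine $L^2$ eigenfunction on $M'$ (smoothness being automatic by elliptic regularity, since $\pi$ is a smooth local isometry). This produces infinitely many independent $L^2$ eigenfunctions on $M'$, and the only remaining issue is to confirm that they contribute to the discrete spectrum.

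That last point is where the sole genuine care is needed, and it is where I expect the crux to lie: one must know that a square-integrable eigenfunction cannot arise from the continuous part of the spectrum. This is precisely the defining feature of the decomposition $L^2(M')=L^2_{disc}(M')\oplus L^2_{cont}(M')$: the continuous spectrum is spanned by the (non-$L^2$) Eisenstein series and contains no $L^2$ eigenfunctions, so every square-integrable eigenfunction lies in $L^2_{disc}(M')$. Granting this, the pullbacks $\phi_j\circ\pi$ all lie in $L^2_{disc}(M')$, which is therefore infinite dimensional; equivalently, $M'$ has an infinite discrete spectrum, as desired.
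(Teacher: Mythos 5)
Your proof is correct and takes essentially the same approach as the paper: what you phrase as pullback $\pi^*$ along the covering map, the paper phrases as regarding a $\Gamma$-invariant $L^2$ eigenfunction on ${\bf H}^d$ as $\Gamma'$-invariant, and your $n$-sheeted integration formula is exactly the paper's observation that a fundamental domain for $\Gamma'$ is a finite union of fundamental domains for $\Gamma$. Your extra care (orthogonality of the pullbacks, and the point that an $L^2$ eigenfunction must lie in $L^2_{disc}(M')$) only makes explicit what the paper leaves implicit.
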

\begin{proof}
The eigenfunctions associated to the discrete Laplace spectrum of $M$ are the set of eigenfunctions of the Laplacian that are invariant under $\Gamma$ and which are $L^2$-integrable over some (and hence any) fundamental domain for $\Gamma$. Any such function is also invariant under $\Gamma'$, and since the fundamental domain of $\Gamma'$ is a finite union of fundamental domains of $\Gamma$, the function will also be $L^2$ integrable over a fundamental domain for $\Gamma'$. It follows that $M'$ has an infinite discrete Laplace spectrum if $M$ does.\end{proof}

In order to discuss the spectrum of $M$ further we need to make clear the contribution of Eisenstein series. Let $[c]$ be a cusp of $\Gamma$ with stabilizer $\Gamma_c$. The {\it Eisenstein series} on $M$ associated to $[c]$ is defined to be the convergent series \[E_{M,c}(w,s) = \sum_{\gamma\in \Gamma_c \backslash \Gamma} y(\sigma^{-1}\gamma w)^s, \qquad w\in{\bf H}^d, s\in \C, \mathrm{Re}(s) > d-1, \] where $\gamma\in\Gamma$ represents a non-identity coset $\Gamma_c\gamma$ of $\Gamma_c$ in $\Gamma$ and $\sigma$ is the orientation preserving isometry of hyperbolic space taking the point at infinity to the cusp point $c$. Here we use the coordinates $z=(x,y)\in {\bf H}^d = {\bf R}^{d-1}\times {\bf R}^+$ for the upper half-space. 

Let $c_1,\dots, c_\kappa$ be representatives of a full set of inequivalent cusps of $\Gamma$. To ease notation we will temporarily refer to the Eisenstein series associated to the $i$-th cusp by $E_i(w,s)$. The constant term of $E_i(w,s)$ with respect to $c_j$ is denoted $E_{ij}(w,s)$ and satisfies \[E_{ij}(w,s) = \delta_{ij}y(\sigma_j^{-1}w)^s+\phi_{ij}(s)y(\sigma_j^{-1}w)^{d-1-s},\] where $\sigma_j$ is the orientation preserving isometry of hyperbolic space taking the point at infinity to the cusp point $c_j$ and where the coefficients $\phi_{ij}(s)$ define the {\it scattering matrix} $\Phi(s) = (\phi_{ij})$. We define the {\it scattering determinant} to be the function $\varphi(s) = \det \Phi(s)$. The Eisenstein series $E_j(w,s)$, the scattering matrix $\Phi(s)$ and the scattering determinant $\phi(s)$ have meromorphic extensions to the complex plane. The poles of $\varphi(s)$ are poles of the Eisenstein series and all lie in the half-plane $\mathrm{Re}(s) < \frac{d-1}{2}$, except for at most finitely many poles in the interval $(\frac{d-1}{2},d-1]$. The latter poles are related to the discrete spectrum as follows. Taking the residue of $E_j(w,s)$ at one of the latter poles yields an eigenfunction of the Laplacian with eigenvalue $s(d-1-s)$. The subset of the discrete spectrum arising from residues of poles of Eisenstein series (equivalently, of $\varphi(s)$) is called the {\it residual spectrum}. If $t$ is such a pole then we define the {\it multiplicity} at $t$ to be the order of the pole at $t$, plus the dimension of the eigenspace in the case when $t$ contributes to the residual spectrum as described above. This discussion motivates the following definition.

\begin{defn}
Let $M_1, M_2$ be $n$-cusped hyperbolic $d$-manifolds (for some positive integer $n$) of finite volume with scattering determinants $\varphi_1(s), \varphi_2(s)$. We say that $M_1$ and $M_2$ are isospectral if 
\begin{itemize}
\item $M_1$ and $M_2$ have the same discrete spectrum, counting multiplicities;
\item $\varphi_1(s)$ and $\varphi_2(s)$ have the same set of poles and multiplicities.
\end{itemize}
\end{defn}

The scattering determinant is in general very difficult to compute explicitly, although it has been worked out in several special case. For example, the scattering determinants associated to Hilbert modular groups over number fields have been computed in terms of Dedekind zeta functions by Efrat and Sarnak \cite{ES} and Masri \cite{Masri}.


\section{Cusps of finite covers of hyperbolic manifolds}\label{section:cusps}

We begin with a group theoretic lemma. Let $G$ be a group, $g$ be an element of $G$, and $H,K$ be subgroups of $G$. We define the double coset $HgK$ by \[HgK = \{ hgk : h\in H, k\in K\}.\]

\begin{lemma}\label{groupactiondoublecoset}
There is a bijection between the cosets of $H$ in $HgK$ and the cosets of $gKg^{-1}\cap H$ in $gKg^{-1}$.
\end{lemma}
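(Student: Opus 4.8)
The plan is to exhibit an explicit bijection by first writing the double coset as a union of left cosets of $H$, and then transporting the indexing set through conjugation by $g$. I would begin by observing that $HgK = \bigcup_{k\in K} Hgk$, so that the left cosets of $H$ contained in $HgK$ are exactly those of the form $Hgk$ with $k\in K$; denote this collection by $X$. Writing $L = gKg^{-1}$, which is a subgroup of $G$ since conjugation by $g$ is an automorphism carrying $K$ bijectively onto $L$, I would then define a map $\Phi\colon X \to (L\cap H)\backslash L$ by sending $Hgk$ to the right coset $(L\cap H)\,gkg^{-1}$.

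The heart of the argument is the chain of equivalences
\[ Hgk_1 = Hgk_2 \iff gk_1(gk_2)^{-1}\in H \iff (gk_1g^{-1})(gk_2g^{-1})^{-1}\in H\cap L \iff (L\cap H)gk_1g^{-1}=(L\cap H)gk_2g^{-1}. \]
The middle step uses that $gk_1(gk_2)^{-1} = gk_1k_2^{-1}g^{-1} = (gk_1g^{-1})(gk_2g^{-1})^{-1}$ automatically lies in $L$, so membership in $H$ is the same as membership in $H\cap L$. Since these are genuine equivalences, reading them simultaneously shows that $\Phi$ is both well defined (equal cosets have equal images) and injective (equal images force equal cosets). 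Surjectivity is then immediate: every element of $L$ has the form $gkg^{-1}$ for some $k\in K$, so every right coset $(L\cap H)gkg^{-1}$ is the image $\Phi(Hgk)$.

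The only genuine hazard here is bookkeeping about the side on which the cosets live: $HgK$ decomposes naturally into \emph{left} cosets of $H$, whereas the target that falls out of the computation consists of \emph{right} cosets of $L\cap H$ in $L$. This is the step I would be most careful to state correctly. It causes no loss, since the left and right cosets of a subgroup are always equinumerous, and the explicit map $\Phi$ pins down the correspondence exactly. Beyond this, no deeper obstacle arises; the statement is purely formal once the conjugation isomorphism $k\mapsto gkg^{-1}$ is introduced.
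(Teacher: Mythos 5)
Your proof is correct and is essentially the paper's argument: both decompose $HgK$ into the cosets $Hgk$, $k\in K$, and both rest on the same equivalence $Hgk_1=Hgk_2 \iff gk_1k_2^{-1}g^{-1}\in H\cap gKg^{-1}$; the only difference is that you build the conjugation by $g$ into the map $\Phi$ directly, whereas the paper first produces the bijection $Hgk\mapsto (K\cap g^{-1}Hg)k$ onto cosets in $K$ and then conjugates as a second step. (Your ``left versus right'' worry is purely terminological and harmless: both $Hgk$ and $(L\cap H)gkg^{-1}$ are cosets of the same form, subgroup on the left, exactly as in the paper.)
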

\begin{proof}
Recall that $HgK$ is the union of the cosets $Hgk$ as $k$ varies over the elements of $K$. As right cosets of $H$ in $G$, two cosets $Hgk_1$ and $Hgk_2$ intersect if and only if they are equal. Observe that $Hgk_1=Hgk_2$ if and only if there is an element $h\in H$ such that $gk_1=hgk_2$, or equivalently, if and only if $k_1k_2^{-1}\in g^{-1}Hg$ (and thus is an element of $K\cap g^{-1}Hg$). This shows that $Hgk_1=Hgk_2$ if and only if $(K\cap g^{-1}Hg)k_1=(K\cap g^{-1}Hg)k_2$. We have therefore shown that 
the map $f$ given by $f(Hgk)=(K\cap g^{-1}Hg)k$ is a bijection between the cosets of $H$ in $HgK$ and of $K\cap g^{-1}Hg$ in $K$. We can now conjugate by $g$ to obtain a bijection between the cosets of $H$ in $HgK$ and the cosets of $(gKg^{-1}\cap H)$ in $gKg^{-1}$.
\end{proof}

Let $\Gamma$ be a discrete subgroup of $\mathrm{Isom}^+({\bf H}^d)$ and $x,y\in \partial{\bf H}^d$ be $\Gamma$-equivalent. Let $G$ be a subgroup of $\Gamma$ of finite index. We now define the set \[\Gamma_{x,y} = \{\gamma\in \Gamma : \gamma x\in G \cdot y\}.\]

\begin{lemma}\label{doublecosetequality}
There is an equality of sets $\Gamma_{x,y}=G\gamma P_x$, where $P_x=\mathrm{Stab}_\Gamma(x)$ and $\gamma$ is any element of $\Gamma$ such that $\gamma x=y$.
\end{lemma}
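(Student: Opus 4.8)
The plan is to establish the set equality $\Gamma_{x,y} = G\gamma P_x$ by proving the two inclusions separately; this is a direct double-coset bookkeeping argument that uses only the defining relations $\gamma x = y$ and $p x = x$ for $p \in P_x$. Before doing so I would record that the right-hand side is independent of the particular choice of $\gamma$: if $\gamma'\in\Gamma$ also satisfies $\gamma' x = y$, then $\gamma^{-1}\gamma'$ fixes $x$, so $\gamma^{-1}\gamma'\in P_x$ and hence $G\gamma' P_x = G\gamma P_x$. This makes the statement well posed and is consistent with the fact that $\Gamma_{x,y}$ was defined with no reference to $\gamma$.

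For the inclusion $G\gamma P_x \subseteq \Gamma_{x,y}$, I would take an arbitrary $\delta = g\gamma p$ with $g\in G$ and $p\in P_x$. Since $p$ fixes $x$ and $\gamma x = y$, one computes $\delta x = g\gamma(p x) = g\gamma x = g y \in G\cdot y$, so $\delta\in\Gamma_{x,y}$ directly from the definition.

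For the reverse inclusion $\Gamma_{x,y}\subseteq G\gamma P_x$, I would start from $\delta\in\Gamma_{x,y}$, i.e. $\delta x \in G\cdot y$, and write $\delta x = g y$ with $g\in G$. Substituting $y = \gamma x$ gives $\delta x = g\gamma x$, whence $(\gamma^{-1}g^{-1}\delta)x = x$. Thus $p := \gamma^{-1}g^{-1}\delta \in P_x$, and rearranging yields $\delta = g\gamma p \in G\gamma P_x$, as required.

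I do not expect a genuine obstacle here: the only points needing care are keeping the sides of the cosets straight (the stabilizer $P_x$ appears on the right precisely because it is $x$, not $y$, that is fixed) and verifying the independence of the choice of $\gamma$ noted above. I would not need Lemma \ref{groupactiondoublecoset} for the equality itself, although that lemma is presumably the motivation for isolating $\Gamma_{x,y}$ as a double coset, since it will later let us count cosets of $P_x$ inside $\Gamma_{x,y}$ in terms of $\gamma P_x\gamma^{-1}\cap G$.
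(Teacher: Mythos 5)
Your proposal is correct and follows essentially the same argument as the paper: the easy inclusion $G\gamma P_x \subseteq \Gamma_{x,y}$ (which the paper simply calls clear, and you spell out), and the reverse inclusion via writing $\delta x = g y = g\gamma x$ and extracting $p = \gamma^{-1}g^{-1}\delta \in P_x$. Your added remark on independence of the choice of $\gamma$ is a nice touch of bookkeeping but not a departure from the paper's route.
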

\begin{proof}
That any element of $G\gamma P_x$ lies in $\Gamma_{x,y}$ is clear. Suppose therefore that $\delta\in \Gamma_{x,y}$ and that $\delta x=gy=g(\gamma x)$. Then $(g\gamma)^{-1}\delta x=x$, hence $\gamma^{-1}g^{-1}\delta\in P_x$ and there exists $p\in P_x$ such that $\gamma^{-1}g^{-1}\delta=p$. This implies that $\delta=g\gamma p\in G\gamma P_x$ and completes the proof of the lemma.
\end{proof}

Let $M={\bf H}^d/\Gamma$ and $N={\bf H}^d/G$ be non-compact hyperbolic $d$-manifolds of finite volume and \[\pi: N \longrightarrow M\] be a covering. Let $c$ represent a cusp of $\Gamma$ and $P=\mathrm{Stab}_\Gamma(c)$.

\begin{df}
The preimage of a cusp of $M$ is always a union of cusps of $N$.  We say a cusp of $M$ {\it remains a cusp} of $N$ relative to $\pi$ when the preimage of that cusp has precisely one cusp of $N$. Algebraically, this is equivalent to $[c]_\Gamma=[c]_G$.
\end{df}

\begin{lemma}\label{cuspremainsacusp}
Suppose $c$ is a cusp representative of both $\Gamma$ and $G$ and that $[c]_\Gamma=[c]_G$. Then there is an equality of sets $\Gamma=GP$.
\end{lemma}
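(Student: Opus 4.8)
The plan is to reduce the claimed set equality to the orbit-level statement encoded by the hypothesis, and then to recognize $GP$ as the double coset produced by Lemma~\ref{doublecosetequality} in the degenerate case $x=y=c$.

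First I would unwind the hypothesis $[c]_\Gamma=[c]_G$. By definition the cusp $[c]_\Gamma$ is the set of parabolic limit points that are $\Gamma$-equivalent to $c$, i.e.\ the orbit $\Gamma\cdot c\subset\partial{\bf H}^d$, and likewise $[c]_G=G\cdot c$. Thus the hypothesis is exactly the orbit equality $\Gamma\cdot c=G\cdot c$. Since $G\subset\Gamma$ the inclusion $G\cdot c\subseteq\Gamma\cdot c$ always holds, so the real content is the reverse inclusion: every $\Gamma$-translate of $c$ is in fact a $G$-translate of $c$.

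Next I would specialize Lemma~\ref{doublecosetequality} to $x=y=c$. Because $c$ is trivially $\Gamma$-equivalent to itself, I may take the element $\gamma$ appearing in that lemma to be the identity, and $P_x=\mathrm{Stab}_\Gamma(c)=P$. The lemma then gives $\Gamma_{c,c}=G\cdot 1\cdot P=GP$, where by definition $\Gamma_{c,c}=\{\delta\in\Gamma:\delta c\in G\cdot c\}$. Combining this with the previous paragraph finishes the argument: the orbit equality $\Gamma\cdot c=G\cdot c$ says that for every $\delta\in\Gamma$ we have $\delta c\in\Gamma\cdot c=G\cdot c$, so $\delta\in\Gamma_{c,c}$; hence $\Gamma_{c,c}=\Gamma$, and together with $\Gamma_{c,c}=GP$ this yields $\Gamma=GP$. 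Equivalently, and without invoking the previous lemma, one can argue directly: given $\delta\in\Gamma$, use $\Gamma\cdot c=G\cdot c$ to pick $g\in G$ with $gc=\delta c$; then $g^{-1}\delta$ fixes $c$, so $g^{-1}\delta\in P$ and $\delta\in GP$.

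I do not expect a genuine obstacle here: once the definitional equivalence "$[c]_\Gamma=[c]_G$ means orbit equality $\Gamma\cdot c=G\cdot c$" is made explicit, the statement is essentially the $x=y=c$ case of Lemma~\ref{doublecosetequality}. The only points requiring care are the correct reading of the bracket notation as a group orbit and the check that $c$ being a cusp representative of both $\Gamma$ and $G$ makes $P=\mathrm{Stab}_\Gamma(c)$ the relevant stabilizer, so that the double coset $GP$ is exactly what the lemma delivers.
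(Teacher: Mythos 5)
Your proposal is correct and essentially matches the paper's proof: the direct argument you give at the end (pick $g\in G$ with $gc=\delta c$, note $g^{-1}\delta\in P$, conclude $\delta=g(g^{-1}\delta)\in GP$) is exactly the paper's argument. Your primary framing via Lemma~\ref{doublecosetequality} with $x=y=c$ and $\gamma=1$ is a valid repackaging of the same stabilizer computation, so it does not constitute a genuinely different route.
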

\begin{proof}
That $GP\subseteq \Gamma$ is clear as both $G$ and $P$ are subgroups of $\Gamma$. Now let $\gamma\in\Gamma$. Since $\Gamma c=Gc$ there exists an element $g\in G$ such that $\gamma c =g c$. It follows that $(g^{-1}\gamma)c=c$, hence $g^{-1}\gamma \in P$ and there exists $p\in P$ such that $g^{-1}\gamma=p$. This implies that $\gamma=gp$, concluding the proof.
\end{proof}

\begin{theorem}\label{cuspsstabs} Let $\{d_1,\dots, d_m\}$ represent the $G$-orbits on the elements of $\partial{\bf H}^d$ belonging to the cusp $[c]$ of $\Gamma$. Then 
\[ [\Gamma:G]=\sum_{i=1}^m [\mathrm{Stab}_\Gamma(d_i):\mathrm{Stab}_\Gamma(d_i) \cap G].\]
\end{theorem}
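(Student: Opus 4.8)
The plan is to realize the parabolic limit points lying in the cusp $[c]$ as a single $\Gamma$-orbit, to decompose that orbit into $G$-orbits, and then to translate the resulting partition into a double coset decomposition of $\Gamma$ to which Lemma \ref{groupactiondoublecoset} applies. The index formula will drop out by counting right cosets of $G$ inside each double coset.

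First I would fix the representative $c$ and set $P=\mathrm{Stab}_\Gamma(c)$. By definition $[c]$ is a single $\Gamma$-equivalence class of parabolic limit points, so the orbit map $\gamma\mapsto\gamma c$ induces a $\Gamma$-equivariant bijection $\Gamma/P\to[c]$. Restricting the action to $G$, the $G$-orbits on $[c]$ correspond exactly to the double cosets in $G\backslash\Gamma/P$. Writing each given representative as $d_i=\gamma_i c$ with $\gamma_i\in\Gamma$, the hypothesis that $d_1,\dots,d_m$ represent the distinct $G$-orbits says precisely that the double cosets $G\gamma_i P$ are distinct and that
\[ \Gamma=\bigsqcup_{i=1}^m G\gamma_i P. \]
In particular $m$ is finite, being at most $[\Gamma:G]$, since each double coset contains at least one right coset of $G$.

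Next I would count right cosets of $G$. Each double coset $G\gamma_i P=\bigcup_{p\in P}G(\gamma_i p)$ is a union of right cosets of $G$, so the disjoint decomposition above gives
\[ [\Gamma:G]=\sum_{i=1}^m \#\{\text{right cosets of } G \text{ contained in } G\gamma_i P\}. \]
Applying Lemma \ref{groupactiondoublecoset} with $H=G$, $K=P$, and $g=\gamma_i$ identifies the $i$-th summand with the number of cosets of $\gamma_i P\gamma_i^{-1}\cap G$ in $\gamma_i P\gamma_i^{-1}$, that is, with $[\gamma_i P\gamma_i^{-1}:\gamma_i P\gamma_i^{-1}\cap G]$. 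Since $\mathrm{Stab}_\Gamma(d_i)=\mathrm{Stab}_\Gamma(\gamma_i c)=\gamma_i P\gamma_i^{-1}$, this summand equals $[\mathrm{Stab}_\Gamma(d_i):\mathrm{Stab}_\Gamma(d_i)\cap G]$, which is the asserted formula.

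The genuinely routine steps are the bijection $\Gamma/P\cong[c]$ and the conjugation identity for stabilizers. The step requiring the most care is verifying that the hypothesis on the $d_i$ yields a \emph{bona fide} partition $\Gamma=\bigsqcup_i G\gamma_i P$ indexed by exactly the $m$ given orbits: one must confirm that $G$-orbits on $[c]$, double cosets in $G\backslash\Gamma/P$, and the chosen representatives $d_i$ all match up bijectively, since the count would be wrong if a double coset were omitted or repeated. Once this correspondence is pinned down, Lemma \ref{groupactiondoublecoset} performs the remaining bookkeeping and no further analytic input about the manifolds is needed.
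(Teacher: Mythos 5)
Your proof is correct and follows essentially the same route as the paper's: both decompose $\Gamma$ into double cosets indexed by the $G$-orbits on $[c]$, count the right cosets of $G$ inside each double coset via Lemma \ref{groupactiondoublecoset}, and finish with the conjugation identity $\mathrm{Stab}_\Gamma(\gamma c)=\gamma\,\mathrm{Stab}_\Gamma(c)\,\gamma^{-1}$. The only cosmetic difference is that you base the decomposition at $c$ and invoke the standard orbit--double-coset correspondence directly, whereas the paper bases it at $d_1$ and establishes the same partition through the sets $\Gamma_{d_1,d_j}$ of its Lemma \ref{doublecosetequality}.
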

\begin{proof}

Write $\Gamma$ as a disjoint union of cosets $G\gamma_i$: \[\Gamma = \bigcup_{i=1}^r G\gamma_i.\] Since $\Gamma$ acts transitively on $[c]$, every element of $[c]$ is in the $G$ orbit of $\gamma_i d_1$ for some $i$. For each $j\in\{1,\dots,m\}$, fix $\delta_j\in\Gamma$ such that $\delta_j d_1=d_j$. By Lemma \ref{doublecosetequality}, $\Gamma_{d_1,d_j}=G\delta_j \mathrm{Stab}_\Gamma(d_1)$. Lemma \ref{groupactiondoublecoset} shows that $\Gamma_{d_1,d_j}$ is the union of $n$ cosets of $G$, where $n$ is the index of $\delta_j  \mathrm{Stab}_\Gamma(d_1) \delta_j^{-1} \cap G$ in $\delta_j  \mathrm{Stab}_\Gamma(d_1) \delta_j^{-1}$. As $\delta_j \mathrm{Stab}_\Gamma(d_1)\delta_j^{-1}=\mathrm{Stab}_\Gamma(\delta_jd_1)=\mathrm{Stab}_\Gamma(d_j)$, we see that $n=[\mathrm{Stab}_\Gamma(d_j):\mathrm{Stab}_\Gamma(d_j) \cap G]$. 

Putting all of this together, we see that $\Gamma$ is the disjoint union of $\Gamma_{d_1,d_j}$ as $j$ varies over $\{1,\dots, m\}$. Since each of these is the disjoint union of $[\mathrm{Stab}_\Gamma(d_j):\mathrm{Stab}_\Gamma(d_j) \cap G]$ cosets of $G$, we conclude that \[ [\Gamma:G]=\sum_{i=1}^m [\mathrm{Stab}_\Gamma(d_i):\mathrm{Stab}_\Gamma(d_i) \cap G],\] which completes our proof.\end{proof}

\begin{cor}\label{cuspsstaycusps}
We have an equality of indices $[\Gamma:G]=[\mathrm{Stab}_\Gamma(d):\mathrm{Stab}_\Gamma(d)\cap G]$ for all cusps $[d]$ of $G$ if and only if every cusp of $M$ remains a cusp of $N$.
\end{cor}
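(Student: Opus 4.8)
The plan is to deduce the corollary directly from Theorem \ref{cuspsstabs} by a positivity argument carried out one cusp of $\Gamma$ at a time. The crucial point is that the integer $m$ appearing in that theorem --- the number of $G$-orbits of parabolic limit points lying in a fixed cusp $[c]$ of $\Gamma$ --- is precisely the number of cusps of $N$ sitting above the cusp of $M$ determined by $[c]$, so that the assertion that this cusp of $M$ remains a cusp of $N$ is exactly the statement $m=1$. Since each summand $[\mathrm{Stab}_\Gamma(d_i):\mathrm{Stab}_\Gamma(d_i)\cap G]$ is an index of a subgroup, hence a positive integer at least $1$, the sum in Theorem \ref{cuspsstabs} equals any single one of its terms if and only if it has only one term. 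Two preliminary observations will be needed. First, every cusp $[d]$ of $G$ lies above a unique cusp $[c]$ of $\Gamma$, namely its image under $\pi$, so the cusps of $G$ are partitioned according to the cusp of $\Gamma$ they cover. Second, the index $[\mathrm{Stab}_\Gamma(d):\mathrm{Stab}_\Gamma(d)\cap G]$ depends only on the $G$-orbit of $d$: replacing $d$ by $gd$ with $g\in G$ conjugates $\mathrm{Stab}_\Gamma(d)$ by $g$, and since $g\in G$ it conjugates $\mathrm{Stab}_\Gamma(d)\cap G$ by $g$ as well, leaving the index unchanged.

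For the direction assuming that every cusp of $M$ remains a cusp of $N$, I would take an arbitrary cusp $[d]$ of $G$ and let $[c]$ be the cusp of $\Gamma$ it covers. By hypothesis $[c]$ remains a cusp, so in the notation of Theorem \ref{cuspsstabs} one has $m=1$ and the sum collapses to the single term indexed by $d_1$. Using the orbit-invariance of the index noted above, this term equals $[\mathrm{Stab}_\Gamma(d):\mathrm{Stab}_\Gamma(d)\cap G]$, which yields the desired equality $[\Gamma:G]=[\mathrm{Stab}_\Gamma(d):\mathrm{Stab}_\Gamma(d)\cap G]$.

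For the converse, suppose the index equality holds for every cusp of $G$, and fix a cusp $[c]$ of $\Gamma$ with $G$-orbit representatives $d_1,\dots,d_m$. Applying the hypothesis to $[d_1]$ gives $[\Gamma:G]=[\mathrm{Stab}_\Gamma(d_1):\mathrm{Stab}_\Gamma(d_1)\cap G]$. On the other hand, Theorem \ref{cuspsstabs} expresses $[\Gamma:G]$ as the sum of this term together with $m-1$ further terms, each at least $1$. Comparing the two expressions forces $m-1\le 0$, hence $m=1$, so $[c]$ remains a cusp of $N$; as $[c]$ was arbitrary, every cusp of $M$ remains a cusp of $N$.

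The argument is short, and the only real obstacle is bookkeeping rather than mathematics: one must be careful that the cusps of $G$ are counted as $G$-orbits of parabolic limit points rather than as individual points, that the stabilizer index is genuinely constant along such an orbit, and that each cusp of $G$ maps to a well-defined cusp of $\Gamma$ so that the per-cusp statement of Theorem \ref{cuspsstabs} can be assembled into the claimed biconditional. Once these identifications are in place, the equivalence reduces to the elementary observation that a sum of positive integers equals one of its terms exactly when there is only a single term.
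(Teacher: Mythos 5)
Your proof is correct, and it takes a genuinely different route from the paper's for half of the statement. The paper handles the two implications asymmetrically: the reverse implication (index equality for all cusps of $G$ forces every cusp of $M$ to remain a cusp of $N$) is, exactly as in your argument, dismissed as an immediate positivity consequence of Theorem \ref{cuspsstabs}; but for the forward implication the paper does not invoke Theorem \ref{cuspsstabs} at all. Instead it applies Lemma \ref{cuspremainsacusp} to write $\Gamma = GP$ with $P = \mathrm{Stab}_\Gamma(d)$, and then exhibits an explicit bijection $Gp \mapsto (P\cap G)p$ between the cosets of $G$ in $\Gamma$ and the cosets of $P\cap G$ in $P$, giving $[\Gamma:G]=[P:P\cap G]$ directly. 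You instead deduce the forward implication from Theorem \ref{cuspsstabs} itself: when the cusp remains a cusp the sum has a single term ($m=1$), and by your orbit-invariance observation (conjugating by $g\in G$ carries $\mathrm{Stab}_\Gamma(d)$ to $\mathrm{Stab}_\Gamma(gd)$ and, because $g$ normalizes $G$, carries $\mathrm{Stab}_\Gamma(d)\cap G$ to $\mathrm{Stab}_\Gamma(gd)\cap G$) that single term equals the index at any point of the orbit --- equivalently, one could simply choose $d$ itself as the orbit representative in the theorem. Your route is more uniform, making the whole corollary a formal consequence of the counting theorem plus one well-definedness remark, and it bypasses Lemma \ref{cuspremainsacusp} entirely; the paper's route makes the coset bijection explicit, which serves as a warm-up for the proof of Theorem \ref{sameeis}, where the same device (coset representatives for $P\cap G$ in $P$ doubling as coset representatives for $G$ in $\Gamma$) is reused. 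The bookkeeping points you flag --- cusps of $G$ counted as $G$-orbits of parabolic limit points, and the identification of $m=1$ with ``remains a cusp'' --- agree with the paper's definitions, so there is no gap.
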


\begin{proof}

We first prove that if every cusp of $M$ remains a cusp of $N$  then $[\Gamma:G]=[\mathrm{Stab}_\Gamma(d):\mathrm{Stab}_G(d)]$ for all cusps $[d]$ of $G$. Fix a cusp $[d]$ of $G$ and define $P=\mathrm{Stab}_\Gamma(d)$. We must show that $[P:P\cap G]=[\Gamma:G]$. To that end, suppose that $p_1, p_2\in P$. Then 
\begin{align*}
Gp_1\cap Gp_2\neq \emptyset &\iff Gp_1=Gp_2 \\
&\iff p_1=gp_2 \text{ for some }g\in G \\
&\iff p_1p_2^{-1}=g \\
&\iff p_1p_2^{-1} \in P\cap G \\
&\iff (P\cap G)p_1=(P\cap G)p_2.
\end{align*}
We have therefore exhibited a bijection between the cosets of $G$ in $GP=\Gamma$ (the equality follows from Lemma \ref{cuspremainsacusp}) and the cosets of $(P\cap G)$ in $P$, hence $[\Gamma:G]=[P:P\cap G]$.

As the reverse direction is an immediate consequence of Theorem \ref{cuspsstabs}, our proof is complete.
\end{proof}

\begin{cor}\label{numberofcusps}
Suppose that $N$ is a normal cover of $M$. Let $[c]$ be a cusp of $\Gamma$ and $[d]$ be a cusp of $G$ contained in $[c]$. The number of cusps of $G$ contained in $[c]$ is \[\frac{[\Gamma:G]}{[\mathrm{Stab}_\Gamma(d):\mathrm{Stab}_\Gamma(d)\cap G]}.\]
\end{cor}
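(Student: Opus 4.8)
The plan is to reduce everything to Theorem \ref{cuspsstabs}, which already expresses $[\Gamma:G]$ as a sum of stabilizer indices over the $G$-orbits $d_1,\dots,d_m$ of parabolic limit points inside the $\Gamma$-cusp $[c]$. By definition each such $G$-orbit is a cusp of $G$ contained in $[c]$, so the number we wish to compute is exactly $m$. Thus the corollary will follow as soon as I show that, under the normality hypothesis, \emph{every} summand appearing in Theorem \ref{cuspsstabs} equals $[\mathrm{Stab}_\Gamma(d):\mathrm{Stab}_\Gamma(d)\cap G]$; this simultaneously collapses the sum and shows that the index in the statement is independent of the chosen representative $d$, so the formula is well defined.

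First I would fix one representative $d$ and observe that all the $d_i$ lie in the single $\Gamma$-orbit $[c]$, so for each $i$ there is an element $\gamma_i\in\Gamma$ with $d_i=\gamma_i d$. Conjugation of stabilizers then gives
\[\mathrm{Stab}_\Gamma(d_i)=\gamma_i\,\mathrm{Stab}_\Gamma(d)\,\gamma_i^{-1}.\]
The crucial step is now to use that $N$ is a normal cover, i.e.\ $G$ is normal in $\Gamma$, which forces $\gamma_i^{-1}G\gamma_i=G$. This lets me interchange intersection and conjugation:
\[\mathrm{Stab}_\Gamma(d_i)\cap G=\gamma_i\,\mathrm{Stab}_\Gamma(d)\,\gamma_i^{-1}\cap G=\gamma_i\bigl(\mathrm{Stab}_\Gamma(d)\cap G\bigr)\gamma_i^{-1}.\]
Hence conjugation by $\gamma_i$ is an isomorphism carrying the pair $\bigl(\mathrm{Stab}_\Gamma(d),\,\mathrm{Stab}_\Gamma(d)\cap G\bigr)$ to the pair $\bigl(\mathrm{Stab}_\Gamma(d_i),\,\mathrm{Stab}_\Gamma(d_i)\cap G\bigr)$, so the indices coincide:
\[[\mathrm{Stab}_\Gamma(d_i):\mathrm{Stab}_\Gamma(d_i)\cap G]=[\mathrm{Stab}_\Gamma(d):\mathrm{Stab}_\Gamma(d)\cap G].\]

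Substituting this common value into Theorem \ref{cuspsstabs} collapses the sum to $[\Gamma:G]=m\,[\mathrm{Stab}_\Gamma(d):\mathrm{Stab}_\Gamma(d)\cap G]$, and solving for $m$ yields the asserted count. I expect the only delicate point to be the interchange of intersection and conjugation in the second display above, which is precisely where normality of $G$ is used; without it the summands need not be equal, and indeed the non-normal case is exactly what Corollary \ref{cuspsstaycusps} addresses separately. Everything else is routine bookkeeping about $\Gamma$-orbits and stabilizers.
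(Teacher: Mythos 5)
Your proof is correct and follows essentially the same route as the paper: both reduce to Theorem \ref{cuspsstabs} and then use normality of $G$ in $\Gamma$ to show, via conjugation of stabilizers, that all the summands $[\mathrm{Stab}_\Gamma(d_i):\mathrm{Stab}_\Gamma(d_i)\cap G]$ coincide, so the sum collapses to $m$ copies of a single index. The only cosmetic difference is that you compare every $d_i$ to a fixed representative $d$, whereas the paper compares arbitrary pairs $d_i,d_j$; the underlying computation is identical.
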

\begin{proof}
In light of Theorem \ref{cuspsstabs} it suffices to prove that if $[d_i],[d_j]$ are cusps of $G$ contained in the cusp $[c]$ of $\Gamma$ then $[\mathrm{Stab}_\Gamma(d_i):\mathrm{Stab}_\Gamma(d_i)\cap G]=[\mathrm{Stab}_\Gamma(d_j):\mathrm{Stab}_\Gamma(d_j)\cap G]$. To that end, let $\gamma\in\Gamma$ be such that $\gamma d_i=d_j$. Then \[\mathrm{Stab}_\Gamma(d_j) = \mathrm{Stab}_\Gamma(\gamma d_i)=\gamma \mathrm{Stab}_\Gamma(d_i) \gamma^{-1},\] hence, as $G=\gamma G \gamma^{-1}$, we have \[ [\mathrm{Stab}_\Gamma(d_j):\mathrm{Stab}_\Gamma(d_j)\cap G] = [\gamma \mathrm{Stab}_\Gamma(d_i)\gamma^{-1} :\gamma \mathrm{Stab}_\Gamma(d_i)\gamma^{-1}\cap \gamma G\gamma^{-1}]=[\mathrm{Stab}_\Gamma(d_i):\mathrm{Stab}_\Gamma(d_i)\cap G], \] which completes the proof.
\end{proof}


\section{Eisenstein series}\label{section:eisenstein}

\begin{theorem}\label{sameeis}
Let $M={\bf H}^d/\Gamma$ be a non-compact hyperbolic $d$-manifold and $N={\bf H}^d/G$ be a finite cover of $M$ with covering degree $n$. If a cusp $[c]$ of $\Gamma$ is also a cusp of $G$ (i.e., the preimage in $N$ of the corresponding cusp of $M$ is a single cusp) then $E_{M,c}(w,s)=E_{N,c}(w,s)$.
\end{theorem}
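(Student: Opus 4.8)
The plan is to prove equality term by term after identifying the index sets of the two series. Write $P=\Gamma_c=\mathrm{Stab}_\Gamma(c)$ and $G_c=\mathrm{Stab}_G(c)=G\cap P$, and let $\sigma$ be the orientation-preserving isometry sending $\infty$ to $c$. Since $\sigma$ depends only on the cusp point $c$, it is the same for $M$ and $N$, so by definition
\[
E_{M,c}(w,s)=\sum_{\gamma\in P\backslash\Gamma} y(\sigma^{-1}\gamma w)^s,\qquad E_{N,c}(w,s)=\sum_{\gamma\in G_c\backslash G} y(\sigma^{-1}\gamma w)^s.
\]
Both sums converge absolutely for $\mathrm{Re}(s)>d-1$, so it suffices to exhibit a bijection between the index sets $G_c\backslash G$ and $P\backslash\Gamma$ under which the summands agree; equality on this half-plane then propagates to all of $\C$ by uniqueness of meromorphic continuation.

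First I would invoke the hypothesis, which is exactly the statement $[c]_\Gamma=[c]_G$. Since $c$ is a cusp representative of both groups, Lemma~\ref{cuspremainsacusp} gives the equality of sets $\Gamma=GP$, and taking inverses yields $\Gamma=PG$. This last decomposition is the one I need, because the Eisenstein series is a sum over \emph{right} cosets $P\backslash\Gamma$: given any $\gamma\in\Gamma$, write $\gamma=pg$ with $p\in P$, $g\in G$, so that $P\gamma=Pg$. Hence the map $G\to P\backslash\Gamma$, $g\mapsto Pg$, is surjective.

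Next I would determine when two such cosets coincide. For $g_1,g_2\in G$ we have $Pg_1=Pg_2$ iff $g_1g_2^{-1}\in P$; but $g_1g_2^{-1}\in G$ as well, so this holds iff $g_1g_2^{-1}\in G\cap P=G_c$, i.e.\ iff $G_cg_1=G_cg_2$. Therefore $g\mapsto Pg$ descends to a well-defined bijection
\[
G_c\backslash G\;\longrightarrow\;P\backslash\Gamma,\qquad G_cg\;\longmapsto\;Pg=\Gamma_cg .
\]
Under this bijection the coset $G_cg$ and its image $\Gamma_cg$ share the representative $g$, so they contribute the identical term $y(\sigma^{-1}gw)^s$ to the respective series. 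Reindexing the absolutely convergent sum for $E_{N,c}$ along this bijection then produces exactly $E_{M,c}$, giving the claimed equality. (The bijection sends the identity coset to the identity coset, so the argument is insensitive to whether one includes the trivial coset in the sum.)

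I anticipate that the only real subtlety is bookkeeping with the side on which the cosets are taken: one must use $\Gamma=PG$ rather than $\Gamma=GP$ to reduce an arbitrary right coset $P\gamma$ to one of the form $Pg$ with $g\in G$. It is also worth noting, for the summands to be genuinely coset-invariant, that $\sigma^{-1}P\sigma$ consists of Euclidean isometries fixing $\infty$ and hence preserving the height $y$; this is already implicit in the well-definedness of the series as written and guarantees that using $g$ as the common representative on both sides introduces no ambiguity.
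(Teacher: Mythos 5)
Your proof is correct, and its core --- the bijection $G_c\backslash G \to P\backslash\Gamma$, $G_c g \mapsto Pg$, followed by reindexing the series --- is the same as the paper's; in particular the computation $Pg_1=Pg_2 \iff g_1g_2^{-1}\in P\cap G \iff (P\cap G)g_1=(P\cap G)g_2$ appears there essentially verbatim. Where you genuinely differ is in how you produce a representative in $G$ for each coset of $P\backslash\Gamma$. The paper invokes Theorem~\ref{cuspsstabs} to get the index equality $[\Gamma:G]=[P:P\cap G]$ and then selects a system $\{\delta_1,\dots,\delta_n\}\subset P$ of coset representatives for $P\cap G$ in $P$ that simultaneously represents the cosets of $G$ in $\Gamma$ (a step that itself requires a small distinctness-plus-counting argument the paper leaves implicit); writing $\gamma=\delta_j g$ then gives $P\gamma=Pg$. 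You instead apply Lemma~\ref{cuspremainsacusp} to get $\Gamma=GP$ and take inverses to obtain $\Gamma=PG$, from which the representative in $G$ falls out in one line. Your route is more economical: it rests on the short Lemma~\ref{cuspremainsacusp} rather than the double-coset machinery behind Theorem~\ref{cuspsstabs}, and the inversion trick cleanly handles the left/right-coset bookkeeping that the paper glosses over. Your two side remarks --- that $\sigma^{-1}P\sigma$ fixes $\infty$ and preserves the height $y$, so the summands are independent of the chosen coset representative, and that the identity coset corresponds to the identity coset --- are details the paper takes for granted, and including them makes the argument self-contained.
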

\begin{proof}
Let $c$ represent a fixed cusp of $\Gamma$ and $P=\mathrm{Stab}_\Gamma(c)$. We begin our proof by noting that Theorem \ref{cuspsstabs} shows that $[\Gamma:G]=[P:P\cap G]$, hence we may select a collection of coset representatives for $P\cap G$ in $P$ which is also a collection of coset representatives for $G$ in $\Gamma$. Let $\{\delta_1,\dots, \delta_n\}\subset P$ be such a collection.

An arbitrary term of $E_{M,c}(w,s)$ is of the form $y(\sigma^{-1}\gamma w)^s$ where $\gamma\in\Gamma$ represents a non-identity coset $P\gamma$ of $P$ in $\Gamma$ and $\sigma$ is the orientation preserving isometry of hyperbolic space taking the point at infinity to the cusp point $c$. Here we use the coordinates $z=(x,y)\in {\bf H}^d = {\bf R}^{d-1}\times {\bf R}^+$ for the upper half-space. Using our decomposition of $\Gamma$ into cosets of $G$ we see that there exists $\delta_j$ and $g\in G$ such that $\gamma=\delta_j g$. Because $\delta_j\in P$, the coset $P\gamma=P\delta_j g$ is equal to the coset $Pg$ as cosets of $P\backslash \Gamma$. In particular this implies that we may choose representatives for the cosets $P\backslash \Gamma$ to all lie in $G$. Note that for all $g_1,g_2\in G$ we have

\begin{align*}
Pg_1=Pg_2 &\iff g_1g_2^{-1}\in P \\
&\iff g_1g_2^{-1}\in P\cap G \\
&\iff (P\cap G)g_1=(P\cap G)g_2.
\end{align*}

It follows that \[E_{M,c}(w,s) = \sum_{\gamma\in P \backslash \Gamma} y(\sigma^{-1}\gamma w)^s = \sum_{g\in P\cap G \backslash G} y(\sigma^{-1}g w)^s = E_{N,c}(w,s).\]
\end{proof}

The following is an immediate consequence of Theorem \ref{sameeis}.

\begin{cor}
Suppose that $M$ is a cusped orientable finite volume hyperbolic $d$-manifold and that $M_1, M_2$ are finite covers of $M$ with the same covering degree and having the property that every cusp of $M$ remains a cusp of $M_i$ ($i=1,2$). Then all of the Eisenstein series of $M_1$ and $M_2$ are equal.
\end{cor}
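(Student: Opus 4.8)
The plan is to reduce the statement entirely to Theorem~\ref{sameeis}, applying it twice (once for each cover) and chaining the resulting equalities through the common base manifold $M$. First I would fix notation by writing $M={\bf H}^d/\Gamma$ and $M_i={\bf H}^d/G_i$ for $i=1,2$, where $G_1,G_2$ are finite-index subgroups of $\Gamma$ of the common index $n$. Let $c_1,\dots,c_\kappa$ be representatives of the inequivalent cusps of $\Gamma$, so that the Eisenstein series of $M$ are exactly $E_{M,c_1},\dots,E_{M,c_\kappa}$.

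The key preliminary observation is that the hypothesis that every cusp of $M$ remains a cusp of $M_i$ means, by the definition preceding Lemma~\ref{cuspremainsacusp}, precisely that $[c_j]_\Gamma=[c_j]_{G_i}$ for every $j$ and every $i$. Consequently each $c_j$ is simultaneously a cusp representative for $\Gamma$, for $G_1$, and for $G_2$. I would then note that, since the preimage of a cusp of $M$ is always a union of cusps of $M_i$ and here each such preimage is a single cusp, the cusps of $M_i$ are exhausted by $[c_1]_{G_i},\dots,[c_\kappa]_{G_i}$ and are in bijection with the cusps of $M$. Thus the full list of Eisenstein series of $M_i$ is $E_{M_i,c_1},\dots,E_{M_i,c_\kappa}$, indexed by the same cusp points as those of $M$.

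With this in hand the conclusion is immediate. For each $j$, applying Theorem~\ref{sameeis} to the cover $M_1\to M$ at the cusp $c_j$ (which is legitimate precisely because $[c_j]_\Gamma=[c_j]_{G_1}$) gives $E_{M,c_j}(w,s)=E_{M_1,c_j}(w,s)$; applying it to the cover $M_2\to M$ gives $E_{M,c_j}(w,s)=E_{M_2,c_j}(w,s)$. Chaining these two equalities through $E_{M,c_j}$ yields $E_{M_1,c_j}(w,s)=E_{M_2,c_j}(w,s)$ for every $j$, which is exactly the assertion that all Eisenstein series of $M_1$ and $M_2$ agree.

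Since Theorem~\ref{sameeis} does all of the analytic work, I do not anticipate a genuine obstacle here; the only point requiring care is the bookkeeping of the previous paragraph, namely verifying that the cusps of each $M_i$ are canonically labelled by the same points $c_1,\dots,c_\kappa$ that label the cusps of $M$, so that the phrase ``all of the Eisenstein series'' refers to two families indexed by the same set and the term-by-term comparison is meaningful. I would also remark that the equality of covering degrees is not actually used in this corollary, since the matching of cusps follows solely from the hypothesis that each cusp of $M$ remains a cusp; the common degree is relevant elsewhere, in guaranteeing isospectrality.
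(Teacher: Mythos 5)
Your proof is correct and is essentially the paper's own argument: the paper offers no separate proof, stating only that the corollary is ``an immediate consequence of Theorem \ref{sameeis},'' and your chaining of $E_{M_1,c_j}=E_{M,c_j}=E_{M_2,c_j}$ through the base manifold is exactly the intended filling-in of that remark. Your closing observation that the equal-covering-degree hypothesis is never used for this conclusion (it matters only later, for the isospectrality argument in Theorem \ref{partb}) is also accurate.
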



\section{Congruence covers and $p$-reps}

Let $M$ be a non-compact finite volume orientable hyperbolic $3$-manifold. Let $c_1,\dots, c_\kappa$ represent a complete set of inequivalent cusps of $\pi_1(M)$ and $P_i$ be the subgroup of $\pi_1(M)$ that fixes $c_i$.

\begin{defn}
A surjective homomorphism $\rho: \pi_1(M) \rightarrow \PSL(2,p)$ is called a $p$-rep if, for all $i$, $\rho(P_i)$ is non-trivial and all non-trivial elements of $\rho(P_i)$ are parabolic elements of $\PSL(2,p)$.
\end{defn}

We remark that if $\rho: \pi_1(M) \rightarrow \PSL(2,p)$ is a $p$-rep then $\rho(P_i)$ must be a subgroup of $\PSL(2,p)$ of order $p$.
\begin{theorem}\label{parta}
Let $M$ be a $1$-cusped, non-arithmetic, finite volume orientable hyperbolic $3$-manifold with $p$-reps $\rho: \pi_1(M) \rightarrow \PSL(2,7)$ and $\rho': \pi_1(M) \rightarrow \PSL(2,11)$. Let $k$ be a number field with ring of integers $\mathcal O_k$ and degree not divisible by $3$. Assume that the faithful discrete representation of $\pi_1(M)$ can be conjugated to lie in $\PSL(2,\mathcal O_k)$. There exist infinitely many prime powers $q$ and covers $M_q$ of $M$ such that:

\begin{enumerate}
\item the composite homomorphism 
\[\rho_q:=\rho\circ \iota: \pi_1(M_q)\hookrightarrow \pi_1(M) \rightarrow \PSL(2,7)\] is a $p$-rep,
\item the degree over $M$ of the cover $M_q$ is $\frac{11}{2}(q^3-q)$, 
\item the number of cusps of $M_q$ is at least $q+1$, and
\item $M_q$ has an infinite discrete spectrum.
\end{enumerate}
\end{theorem}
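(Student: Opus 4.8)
The plan is to realise each $M_q$ as the cover of $M$ determined by an explicit finite-index subgroup of $\Gamma:=\pi_1(M)$, assembled from the two given $p$-reps together with a congruence quotient supplied by strong approximation, and then to read off the four assertions from the structure of that subgroup. Throughout I fix the faithful discrete embedding $\Gamma\hookrightarrow\PSL(2,\mathcal{O}_k)$; since $\Gamma$ is a non-elementary lattice it is Zariski dense in $\PSL_2$. For a prime $\mathfrak{p}$ of $\mathcal{O}_k$ with residue field of size $q$, write $\pi_q:\Gamma\to\PSL(2,\mathcal{O}_k/\mathfrak{p})=\PSL(2,q)$ for the reduction.

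The core input is the Strong Approximation Theorem of Nori and Weisfeiler: because $\Gamma$ is Zariski dense, $\pi_q$ is surjective for all but finitely many $\mathfrak{p}$. Discarding the finitely many primes of even residue characteristic and those above $7$ and $11$, I obtain infinitely many odd prime powers $q$ for which $\pi_q$ is onto and $\PSL(2,q)$ is simple and distinct from $\PSL(2,7)$ and $\PSL(2,11)$; the hypothesis $3\nmid[k:\mathbf{Q}]$ is used to control the residue degrees of the available primes, hence which isomorphism types $\PSL(2,q)$ occur, so that these three groups stay pairwise non-isomorphic. Since $\PSL(2,7),\PSL(2,11),\PSL(2,q)$ are then pairwise non-isomorphic finite simple groups and $\rho,\rho',\pi_q$ are each surjective, the structure theory of subdirect products of non-abelian simple groups forces the combined map $(\rho,\rho',\pi_q):\Gamma\to\PSL(2,7)\times\PSL(2,11)\times\PSL(2,q)$ to be surjective. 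This simultaneous surjectivity is the crux of the argument and the one step I expect to be the main obstacle: it is exactly the point at which strong approximation is indispensable, guaranteeing that reduction modulo $\mathfrak{p}$ stays ``independent'' of the two fixed quotients $\rho$ and $\rho'$.

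I then define the cover. Let $A_5\le\PSL(2,11)$ be a point stabiliser for the exceptional action of $\PSL(2,11)$ on $11$ points, so $[\PSL(2,11):A_5]=11$, and set $\pi_1(M_q):=(\rho',\pi_q)^{-1}\big(A_5\times\{e\}\big)$. As $(\rho',\pi_q)$ is onto, this subgroup contains $\ker(\rho',\pi_q)$ and has index $[\PSL(2,11):A_5]\cdot|\PSL(2,q)|=11\cdot\tfrac{q(q^2-1)}{2}=\tfrac{11}{2}(q^3-q)$, giving (2). For (1), surjectivity of the triple map shows $\rho$ remains onto $\PSL(2,7)$ after restriction to $\pi_1(M_q)$. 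For the parabolic condition, each cusp subgroup of $\pi_1(M_q)$ has the form $gPg^{-1}\cap\pi_1(M_q)$ with $P=\mathrm{Stab}_\Gamma(c)\cong\mathbf{Z}^2$; since $A_5$ has no element of order $11$ while $\rho'(gPg^{-1})$ is a parabolic of order $11$, this intersection equals $gPg^{-1}\cap\ker\rho'\cap\ker\pi_q$. Because $7$ is coprime to $11\cdot|\pi_q(gPg^{-1})|$, a short index computation shows $\rho$ is non-trivial on this intersection, hence its image is the full order-$7$ parabolic $\rho(gPg^{-1})$; thus $\rho_q$ is a $p$-rep.

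For (3) I invoke Theorem \ref{cuspsstabs}: since $M$ is $1$-cusped, all cusps of $M_q$ lie over $[c]$, and if $d_1,\dots,d_m$ are the orbit representatives then $\tfrac{11}{2}(q^3-q)=[\Gamma:\pi_1(M_q)]=\sum_{i=1}^m[\mathrm{Stab}_\Gamma(d_i):\mathrm{Stab}_\Gamma(d_i)\cap\pi_1(M_q)]$. By the computation above each summand equals $11\cdot|\pi_q(g_iPg_i^{-1})|\le 11q$, the image lying in a unipotent subgroup of $\PSL(2,q)$ of order $q$; hence $m\ge\tfrac{1}{11q}\cdot\tfrac{11}{2}(q^3-q)=\tfrac{q^2-1}{2}\ge q+1$ for all the prime powers $q\ge 5$ under consideration. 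Finally, (4) is immediate from Lemma \ref{finitecovers}: $M_q$ finitely covers $M$, so it has infinite discrete spectrum provided $M$ does. Note that $\pi_1(M_q)\le(\rho')^{-1}(A_5)$, so every $M_q$ in fact factors through the single fixed $11$-fold cover ${\bf H}^3/(\rho')^{-1}(A_5)$ of $M$; thus it suffices that this one manifold have infinite discrete spectrum, the only analytic ingredient in the proof, with all the remaining assertions being bookkeeping via the coset and cusp lemmas of Section \ref{section:cusps}.
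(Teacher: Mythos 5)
Your construction is, at bottom, the same as the paper's: the subgroup $(\rho',\pi_q)^{-1}(A_5\times\{e\})$ is exactly the kernel of the level-$\mathfrak p$ congruence homomorphism restricted to the pullback $(\rho')^{-1}(A_5)$ of an index-$11$ subgroup, which is precisely how the paper defines $\pi_1(M_q)$. Your verifications of (i)--(iii) are correct and in places cleaner than the paper's: the Goursat/subdirect-product argument for simultaneous surjectivity replaces the paper's appeal to a SAGE-verified fact that $\PSL(2,7)$ is generated by $N$th powers for $N=\frac{q^3-q}{2}$ (the reason the paper imposes $p\equiv 5\pmod{168}$ and inertia degree prime to $3$); and your observation that a cusp stabilizer has unipotent image mod $\mathfrak p$ makes the index $[\,gPg^{-1}:gPg^{-1}\cap\pi_1(M_q)\,]$ divide $11q$, which both gives parabolicity in (i) without the paper's computation that $7\nmid\frac{q^3-q}{2}$ and yields the stronger cusp count $\frac{q^2-1}{2}$ in (iii) via Theorem \ref{cuspsstabs}, rather than the paper's route through Corollary \ref{numberofcusps}. (Your stated reason for the hypothesis $3\nmid[k:\Q]$ --- keeping the three simple groups pairwise non-isomorphic --- is not correct; that only requires the residue characteristic to avoid $7$ and $11$. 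In fact your argument never uses this hypothesis; in the paper it is needed exactly for the $N$th-power step you bypassed.)

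The genuine gap is assertion (iv). You prove only the conditional statement that $M_q$ has infinite discrete spectrum \emph{provided} $M$ (equivalently, its fixed $11$-fold cover $\widetilde{M}={\bf H}^3/(\rho')^{-1}(A_5)$) does, and you supply no proof of that. Infinite discrete spectrum of $M$ is not a hypothesis of Theorem \ref{parta}, and it cannot simply be assumed: for non-compact finite-volume hyperbolic manifolds an infinite discrete spectrum is not automatic, and in the paper's eventual application $M$ is the minimal element of its commensurability class, a case in which nothing in the paper controls the discrete spectrum of $M$ itself. The missing ingredient is \cite[Theorem 2.4]{GR}: a cusped, finite-volume, non-arithmetic hyperbolic $3$-manifold that is \emph{not} the minimal element of its commensurability class has infinite discrete spectrum. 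This applies to $\widetilde{M}$ --- it is non-arithmetic because $M$ is, and it is non-minimal because it properly covers $M$ --- and Lemma \ref{finitecovers} then transfers the conclusion to every $M_q$ (alternatively, one can apply \cite[Theorem 2.4]{GR} to $M_q$ directly). This is the only analytic step of the proof and the only place where the non-arithmeticity hypothesis is used; your proposal invokes neither, which is the telltale sign of the gap.
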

\begin{proof}

We begin by constructing a finite cover $\widetilde{M}$ of $M$ which has an infinite discrete spectrum. The manifold $M_q$ will arise as a finite cover of $\widetilde{M}$ and will therefore have an infinite discrete spectrum by virtue of Lemma \ref{finitecovers}. To that end, let $H$ be an index $11$ subgroup of $\PSL(2,11)$. Such a subgroup is well-known to exist, and the cover of $M$ associated to the pullback subgroup of $H$ by $\rho'$ is a degree $11$ cover of $M$. Denote this cover by $\widetilde{M}$. We claim that $\widetilde{M}$ has one cusp. Let $P$ be the subgroup of $\pi_1(M)$ stabilizing the cusp of $M$. As was commented above, $\rho'(P)$ must be a cyclic subgroup of $\PSL(2,11)$ of order $11$. Since $H$ has index $11$ in $\PSL(2,11)$ and $|\PSL(2,11)|=660 = 2^2 \cdot 3 \cdot 5 \cdot 11$ it must be the case that $\rho'(P)\cap H$ is trivial. It follows that $[P:P\cap \pi_1(\widetilde{M})] = 11 = [\pi_1(M) : \pi_1(\widetilde{M})]$, hence $\widetilde{M}$ has one cusp by Corollary \ref{cuspsstaycusps}. It now follows from  \cite[Theorem 2.4]{GR} that $\widetilde{M}$  has an infinite discrete spectrum. We note that \cite[Theorem 2.4]{GR} has two hypotheses: that $\widetilde{M}$ be non-arithmetic and that $\widetilde{M}$ not be the minimal element in its commensurability class. That $\widetilde{M}$ is non-arithmetic is clear, since it is a finite cover of $M$, which is non-arithmetic. It is equally clear that $\widetilde{M}$ is not the minimal element of its commensurability class, since such an element cannot be a finite cover of another hyperbolic $3$-manifold.

We claim that $\pi_1(\widetilde{M})$ also admits a $p$-rep to $\PSL(2,7)$. In particular, we will show the homomorphism to $\PSL(2,7)$ obtained by composing the inclusion map $\pi_1(\widetilde{M})\hookrightarrow \pi_1(M)$ with $\rho : \pi_1(M) \rightarrow \PSL(2,7)$ is a $p$-rep. To see this, note that because $\gcd(11,|\PSL(2,7)|)=1$, the map $g\mapsto g^{11}$ is a bijection from $\PSL(2,7)$ to itself, hence our claim follows from the fact that for every $\gamma\in\pi_1(M)$ the element $\gamma^{11}$ lies in $\pi_1(\widetilde{M})$.

Given a proper, non-zero ideal $I$ of $\mathcal O_k$ we have a composite homomorphism \[\phi_I: \pi_1(\widetilde{M}) \longrightarrow \PSL(2,\mathcal O_k) \longrightarrow \PSL(2,\mathcal O_k/I)\] called the {\it level $I$ congruence homomorphism}. It follows from the Strong Approximation Theorem of Nori \cite{Nori} and Weisfeiler \cite{Weisfeiler} that for all but finitely many prime ideals $\mathfrak p$ of $\mathcal O_k$ the level $\mathfrak p$ congruence homomorphism $\phi_\mathfrak p$ is surjective.

By Dirichlet's Theorem on Primes in Arithmetic Progressions we may choose a prime $p$ satisfying $p\equiv 5 \pmod{168}$ which does not divide the discriminant of $k$. Let $\mathfrak p$ be a prime ideal of $\mathcal O_k$ lying above $p$ which has inertia degree $f$ satisfying $\gcd(f,3)=1$. Note that the existence of such a prime ideal $\mathfrak p$ follows from the well-known equality in algebraic number theory \[[k:\Q] = \sum_{i=1}^g e(\mathfrak p_i/p)f(\mathfrak p_i/p),\] where $p\mathcal O_k = \mathfrak p_1\cdots \mathfrak p_g$, $e(\mathfrak p_i/p)$ denotes the ramification degree of $\mathfrak p_i$ over $p$ and $f(\mathfrak p_i/p)$ denotes the inertia degree of $\mathfrak p_i$ over $p$. In particular our assertion follows from the hypothesis that $[k:\Q]$ not be divisible by $3$ and the fact that all of the ramification degrees $e(\mathfrak p_i/p)$ are equal to one (since $p$ doesn't divide the discriminant of $k$ and thus does not ramify in $k$).

We observed above that it follows from the Strong Approximation Theorem that for all but finitely many primes the associated congruence homomorphism is surjective. In light of our use of Dirichlet's Theorem on Primes in Arithmetic Progressions in the previous paragraph we may assume that $\mathfrak p$ was selected so that $\phi_\mathfrak p$ is surjective. Let $M_q$ be the cover of $\widetilde{M}$ associated to the kernel of $\phi_\mathfrak p$. The cover $M_q$ of $\widetilde{M}$ is normal of degree \[|\PSL(2,\mathcal O_k/\mathfrak p)| = |\PSL(2,p^f)| = \frac{p^{3f} - p^f}{2},\] which proves (ii) upon setting $q=p^f$.

Assertion (iii) follows from assertion (ii) and Corollary \ref{numberofcusps} since the image under $\phi_\mathfrak p$ of a cusp stabilizer $P_i$ will be an abelian subgroup of $\PSL(2,p^f)$ and thus will have order at most $\frac{p^f(p^f-1)}{2}$ by the classification of subgroups of $\PSL(2,q)$ (see \cite{Dickson}).

We now prove assertion (i). We will abuse notation and denote by $\rho$ the $p$-rep from $\pi_1(\widetilde{M})$ onto $\PSL(2,7)$. Because this $p$-rep was obtained by composing the inclusion of $\pi_1(\widetilde{M})$ into $\pi_1(M)$ with the $p$-rep from $\pi_1(M)$ onto $\PSL(2,7)$ (which was also denoted $\rho$), it suffices to prove assertion (i) with $\widetilde{M}$ in place of $M$. Let $N=\frac{p^{3f} - p^f}{2}=[\pi_1(\widetilde{M}):\pi_1(M_q)]$. As $\rho_q(\pi_1(M_q))$ contains $\rho_q(\gamma^N)=\rho(\gamma^N) =\rho(\gamma)^N$ for all $\gamma\in\pi_1(\widetilde{M})$ and $\rho:\pi_1(M)\rightarrow \PSL(2,7)$ is surjective, the surjectivity of $\rho_q$ follows from the fact (easily verifiable in SAGE \cite{SAGE}) that $\PSL(2,7)$ is generated by the $N$th powers of its elements whenever $p\equiv 5 \pmod{168}$ and $\gcd(f,3)=1$.

Let $P_0$ be the subgroup of $\pi_1(M_q)$ which fixes some cusp of $M_q$ and $P$ be the subgroup of $\pi_1(\widetilde{M})$ fixing the corresponding cusp of $\widetilde{M}$. Because $\rho: \pi_1(\widetilde{M}) \rightarrow \PSL(2,7)$ is a $p$-rep, $\rho(P)$ consists entirely of parabolic elements and therefore is a subgroup of $\PSL(2,7)$ of order $7$. Note that $[P:P_0]=d$ for some divisor $d$ of $N$. We will show that $N$, and thus $d$, is not divisible by $7$. Because $p$ was chosen so that $p\equiv 5 \pmod{168}$, we also have $p\equiv 5 \pmod{7}$ (since $168=2^3\cdot 3\cdot 7$). It is now an easy exercise in elementary number theory to show that $N=\frac{p^{3f} - p^f}{2}$ is not divisible by $7$ whenever $\gcd(f,3)=1$. Having shown that $\gcd(d,7)=1$, we observe that if $\gamma\in P$ has non-trivial image in $\PSL(2,7)$ then $\gamma^d\in P_0$ and thus $\rho_q(\gamma^d)=\rho(\gamma)^d$ is non-trivial in $\PSL(2,7)$.  Since $\rho_q(P_0)$ is a subgroup of $\rho(P)$ and thus also consists entirely of parabolic elements, this proves assertion (i).
\end{proof}


\section{Sunada's Method for constructing isospectral manifolds}

We begin this section by recalling the statement of Sunada's theorem \cite{Sunada}. 

Given a finite group $G$ with subgroups $H_1$ and $H_2$ we say that $H_1$ and $H_2$ are {\it almost conjugate} if, for all $g\in G$, \[\#(H_1\cap [g]) = \#(H_2\cap [g])\] where $[g]$ denotes the conjugacy class of $g$ in $G$.

\begin{theorem}[Sunada]
Let $M$ be a Riemannian manifold and $\rho: \pi_1(M) \rightarrow G$ be a surjective homomorphism. The coverings $M^{H_1}$ and $M^{H_2}$ of $M$ with fundamental groups $\rho^{-1}(H_1)$ and $\rho^{-1}(H_2)$ are isospectral.
\end{theorem}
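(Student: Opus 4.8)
The theorem is the classical one of Sunada, whose operative hypothesis (suppressed in the displayed statement but supplied by the preceding definition) is that $H_1$ and $H_2$ are \emph{almost conjugate} in $G$. The plan is to show that the two covers have identical heat traces $\theta_i(t) = \operatorname{Tr}\!\big(e^{-t\Delta_{M^{H_i}}}\big)$ for every $t>0$; since the heat trace determines the Laplace spectrum with multiplicities, this forces $M^{H_1}$ and $M^{H_2}$ to be isospectral. The geometric input is a single common normal cover: writing $M_0$ for the cover of $M$ associated with $\ker\rho$, the group $G$ acts freely by isometries on $M_0$ with $M_0/G = M$, and $M^{H_i} = M_0/H_i$. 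Let $K_0(x,y,t)$ be the heat kernel of $M_0$; because $G$ acts by isometries it is $G$-invariant, $K_0(gx,gy,t)=K_0(x,y,t)$.

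First I would express each heat trace through $K_0$. The heat kernel of $M_0/H_i$ is the $H_i$-average $\sum_{h\in H_i}K_0(x,hy,t)$, so integrating over the diagonal of a fundamental domain for $H_i$ gives $\theta_i(t)=\int_{F_i}\sum_{h\in H_i}K_0(x,hx,t)\,dx$. Choosing a fundamental domain $F$ for $G$, building $F_i=\bigsqcup_{\gamma\in S_i}\gamma F$ from a transversal $S_i$ of $H_i\backslash G$, and changing variables using the $G$-invariance of $K_0$ reduces this to $\theta_i(t)=\sum_{\gamma\in S_i}\sum_{h\in H_i}a(\gamma^{-1}h\gamma)$, where $a(\mu):=\int_F K_0(x,\mu x,t)\,dx$ is a single twisted trace attached to each $\mu\in G$.

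The combinatorial heart of the argument is to reorganize this double sum by $G$-conjugacy classes, and the step I expect to be the main obstacle is that $a$ is \emph{not} a class function: the integral of $K_0(x,\mu x,t)$ over a fundamental domain genuinely depends on $\mu$, not merely on $[\mu]$, so one cannot sort the sum by conjugacy class directly. The fix is to pass from the transversal $S_i$ to the full group $G$. Since conjugating the summation variable $h$ by an element of $H_i$ permutes $H_i$, replacing $\sum_{\gamma\in S_i}$ by $\sum_{\gamma\in G}$ only multiplies the total by $|H_i|$; and over the full group each $h$ contributes $\sum_{\gamma\in G}a(\gamma^{-1}h\gamma)=|C_G(h)|\,A([h])$, where $A([h])=\sum_{c\in[h]}a(c)$ is summed over the whole class and $|C_G(h)|=|G|/|[h]|$. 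This yields the closed form $\theta_i(t)=\frac{|G|}{|H_i|}\sum_{[c]}\#(H_i\cap[c])\,\frac{A([c])}{|[c]|}$, in which every factor except $\#(H_i\cap[c])$ and $|H_i|$ is independent of $i$.

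Finally I would invoke almost conjugacy. The defining identities $\#(H_1\cap[c])=\#(H_2\cap[c])$ hold for every class $[c]$, and summing them over all classes gives $|H_1|=|H_2|$; substituting into the closed form shows $\theta_1(t)=\theta_2(t)$ for all $t>0$, completing the proof. Two remarks on scope. The same input can be packaged representation-theoretically: almost conjugacy is exactly Gassmann equivalence, i.e.\ $\Ind_{H_1}^{G}\mathbf 1\cong\Ind_{H_2}^{G}\mathbf 1$ as $G$-representations, and this isomorphism transplants eigenfunctions between the covers directly while commuting with $\Delta$. This reformulation is what one needs in the non-compact setting of the present paper, where being isospectral also demands agreement of the scattering poles: the transplantation intertwines the Laplacian actions on $L^2(M^{H_1})$ and $L^2(M^{H_2})$ and so matches the discrete, continuous, and residual parts at once, though justifying the heat-trace identity itself requires the regularized trace and is the one place where non-compactness must be handled with care.
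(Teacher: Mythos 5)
Your argument is correct, and it is essentially the classical heat-trace proof of Sunada's theorem: the unfolding $\theta_i(t)=\sum_{\gamma\in S_i}\sum_{h\in H_i}a(\gamma^{-1}h\gamma)$, the key observation that $a(\mu)=\int_F K_0(x,\mu x,t)\,dx$ is \emph{not} a class function (so that one must pass to the full sum over $G$, at the cost of a factor $|H_i|$, before sorting by conjugacy classes), the closed form $\theta_i(t)=\frac{|G|}{|H_i|}\sum_{[c]}\#(H_i\cap[c])\,A([c])/|[c]|$, and the deduction $|H_1|=|H_2|$ by summing the almost-conjugacy identities are all exactly right. Note, however, that this is a genuinely different route from anything in the paper: the paper states Sunada's theorem purely as a recalled classical result, with no proof beyond the citation to Sunada, and when it actually needs isospectrality in its own (non-compact, cusped) setting, in Theorem \ref{partb}, it does not use heat traces at all. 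Instead it applies the Prasad--Rajan lemma (Lemma \ref{PRlemma}) --- an isomorphism $V^{H_1}\to V^{H_2}$ commuting with every endomorphism that commutes with the $G$-action --- with $V=L^2_{disc}(M_0)$ and $\Delta$ the Laplacian, and handles the continuous part separately by matching Eisenstein series via Theorem \ref{sameeis}. That is precisely the representation-theoretic (Gassmann-equivalence) repackaging you sketch in your closing remark, and the reason the paper leans on it is the caveat you yourself raise: your heat-trace argument requires compactness (a trace-class heat semigroup), whereas in the paper the manifolds have cusps, ``isospectral'' is defined as equality of the discrete spectrum together with equality of the poles of the scattering determinants, and the transplantation argument applies to $L^2_{disc}$ directly with no regularization. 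So your proof correctly establishes the quoted statement in its proper (closed-manifold) scope and is a sound reconstruction of Sunada's original argument, while the paper's operative mechanism is the more algebraic one that survives non-compactness.
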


The following is a group theoretic lemma of Prasad and Rajan \cite[Lemma 1]{PR} which they used to reprove Sunada's theorem. In what follows, if $G$ is a group and $V$ is a $G$-module then $V^G$ is the submodule of invariants of $G$.

\begin{lemma}\label{PRlemma}
Suppose that $G$ is a finite group with almost conjugate subgroups $H_1$ and $H_2$. Assume that $V$ is a representation space of $G$ over a field $k$ of characteristic zero. Then there exists an isomorphism $i: V^{H_1} \rightarrow V^{H_2}$, commuting with the action of any endomorphism $\Delta$ of $V$ which commutes with the action of $G$ on $V$; i.e. the following diagram commutes: 

\begin{center}
\begin{tikzpicture}[every node/.style={midway}]
  \matrix[column sep={4em,between origins}, row sep={2em}] at (0,0) {
    \node(A) {$V^{H_1}$}  ; & \node(C) {$V^{H_2}$}; \\
    \node(B) {$V^{H_1}$}; & \node (D) {$V^{H_2}$};\\
  };
  \draw[<-] (B) -- (A) node[anchor=east]  {$\Delta$};
  \draw[->] (B) -- (D) node[anchor=south] {$i$};
  \draw[->] (C) -- (D) node[anchor=west] {$\Delta$};
  \draw[->] (A) -- (C) node[anchor=north] {$i$};
\end{tikzpicture}
\end{center}
\end{lemma}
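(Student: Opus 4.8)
The plan is to reinterpret the hypothesis of almost conjugacy as an isomorphism of permutation modules and then to realize the invariant subspaces $V^{H_1}$ and $V^{H_2}$ as $\Hom$-spaces via Frobenius reciprocity, so that both the desired isomorphism $i$ and its compatibility with $\Delta$ become instances of naturality. Concretely, write $W_j = k[G/H_j]$ for the permutation $kG$-module on the cosets of $H_j$ (equivalently $W_j = \Ind_{H_j}^G \1$). The first step is to show that the almost conjugacy of $H_1$ and $H_2$ forces $W_1 \cong W_2$ as $kG$-modules. The second step produces, from any such isomorphism, an isomorphism $i\colon V^{H_1}\to V^{H_2}$, and the third step checks that it intertwines every $G$-equivariant $\Delta$.

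For the first step I would compute the character of $W_j$. The trace of $g$ acting on $k[G/H_j]$ counts the cosets fixed by $g$, and a standard orbit count gives $\chi_{W_j}(g) = \frac{|C_G(g)|}{|H_j|}\,\#(H_j\cap [g])$, where $C_G(g)$ is the centralizer of $g$ and $[g]$ its conjugacy class. Hence the hypothesis $\#(H_1\cap[g])=\#(H_2\cap[g])$ for all $g$ is exactly the statement $\chi_{W_1}=\chi_{W_2}$. Since $k$ has characteristic zero, $kG$ is semisimple and the character determines a module up to isomorphism (the permutation modules are even defined over $\Q$, so one may alternatively invoke the classical Gassmann equivalence over $\Q$ and extend scalars, or appeal to Noether--Deuring); either way we obtain a $kG$-isomorphism $\psi\colon W_2 \to W_1$.

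The remaining steps are formal. Frobenius reciprocity gives a natural isomorphism $\Hom_{kG}(W_j, V)\cong V^{H_j}$ sending a $G$-map $f$ to its value $f(\bar e)$ on the trivial coset $\bar e = eH_j$; this lands in $V^{H_j}$ and is bijective because a $G$-map out of $W_j$ is determined by, and free to choose, that single value in $V^{H_j}$. Precomposition with $\psi$ then defines $i\colon V^{H_1}\cong \Hom_{kG}(W_1,V)\to \Hom_{kG}(W_2,V)\cong V^{H_2}$, $f\mapsto f\circ\psi$, an isomorphism since $\psi$ is. To verify the square, note that a $G$-endomorphism $\Delta$ of $V$ acts on $\Hom_{kG}(W_j,V)$ by post-composition $f\mapsto \Delta\circ f$, and under Frobenius reciprocity this corresponds precisely to $v\mapsto \Delta(v)$ on $V^{H_j}$ (using $\Delta(V^{H_j})\subseteq V^{H_j}$, which holds because $\Delta$ commutes with $G$). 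Associativity of composition, $\Delta\circ(f\circ\psi)=(\Delta\circ f)\circ\psi$, then says exactly that $i$ commutes with $\Delta$. The one genuine input is the first step --- translating the combinatorial almost-conjugacy condition into the module isomorphism $W_1\cong W_2$ --- whereas the compatibility with \emph{every} such $\Delta$, which is the whole point for Sunada's application (there $\Delta$ will be the Laplacian), drops out for free from the naturality of post-composition against the single fixed map $\psi$.
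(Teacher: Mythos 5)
Your proof is correct, and it is essentially the argument of Prasad and Rajan \cite[Lemma 1]{PR} to which the paper defers (the paper itself supplies no proof of this lemma): almost conjugacy is translated into the character identity for the permutation modules, hence into a $kG$-isomorphism $k[G/H_1]\cong k[G/H_2]$, after which both $i$ and its compatibility with every $G$-equivariant $\Delta$ fall out formally from the natural identification $V^{H_j}\cong\Hom_{kG}(k[G/H_j],V)$. The only point worth adding is that your fixed-point formula yields $\chi_{W_1}=\chi_{W_2}$ only once one knows $|H_1|=|H_2|$; this does follow from the hypothesis, by summing $\#(H_j\cap[g])$ over all conjugacy classes of $G$.
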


\begin{theorem}\label{partb}
Let $M={\bf H}^3/\Gamma$ be a cusped finite volume orientable hyperbolic $3$-manifold that is non-arithmetic and that is the minimal element in its commensurability class (i.e., $\Gamma=\mathrm{Comm}(\Gamma)$ where $\mathrm{Comm}(\cdot)$ denotes the commensurator). Let $M_0={\bf H}^3/\Gamma_0$ be a finite cover of $M$, $G$ be a finite group and $H_1,H_2$ be non-conjugate almost conjugate subgroups of $G$. Suppose that $\Gamma$ admits a homomorphism onto $G$ such that the induced composite homomorphism $\Gamma_0\hookrightarrow  \Gamma \rightarrow G$ is also onto. Let $M_1, M_2$ be the finite covers of $M_0$ associated to the pullback subgroups of $H_1$ and $H_2$ and assume that $M_1$ and $M_2$ both have the same number of cusps as $M_0$. Then $M_1$ and $M_2$ are are isospectral, have the same complex length spectra, are non-isometric and have infinite discrete spectra.
\end{theorem}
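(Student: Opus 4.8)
The plan is to place $M_1$ and $M_2$ inside a common normal cover of $M_0$ and then separate the three spectral invariants (discrete spectrum, scattering determinant, complex length spectrum), treating each by a different mechanism, after which non-isometry and infinitude of the discrete spectrum are handled geometrically. Write $\rho\colon\Gamma\to G$ for the given surjection and $\rho_0=\rho|_{\Gamma_0}\colon\Gamma_0\to G$ for its (surjective) restriction, set $K=\ker\rho_0$, and let $\widehat M={\bf H}^3/K$. Since $K$ is normal in $\Gamma_0$ with $\Gamma_0/K\cong G$, the manifold $\widehat M$ is a normal cover of $M_0$ with deck group $G$; writing $\Gamma_i=\rho_0^{-1}(H_i)$ for the fundamental group of $M_i$, we have $\Gamma_i/K\cong H_i$ and $M_i=\widehat M/H_i$. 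Because $H_1$ and $H_2$ are almost conjugate they have equal order, so $[\Gamma_0:\Gamma_1]=[G:H_1]=[G:H_2]=[\Gamma_0:\Gamma_2]$ and the two covers of $M_0$ have the same degree. The representation-theoretic content of almost conjugacy that I will use repeatedly is the isomorphism of permutation modules $\mathbb{C}[G/H_1]\cong\mathbb{C}[G/H_2]$ of $G$.

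For the discrete spectrum I would apply Lemma~\ref{PRlemma} eigenspace by eigenspace. The group $G$ acts on $\widehat M$ by isometries, hence on $L^2(\widehat M)$ commuting with the Laplacian, and for each $\lambda$ the $\lambda$-eigenspace $V_\lambda$ of the discrete spectrum of $\widehat M$ is a finite-dimensional representation of $G$ (finite multiplicities, $\widehat M$ having finite volume) on which the Laplacian acts as the scalar $\lambda$. The discrete-spectrum $\lambda$-eigenspace of $M_i$ is precisely $V_\lambda^{H_i}$, since $H_i$-invariant $L^2$-eigenfunctions on $\widehat M$ are exactly the $L^2$-eigenfunctions on $\widehat M/H_i=M_i$. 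Lemma~\ref{PRlemma} (taking $\Delta=\lambda\cdot\mathrm{id}$) then gives $\dim V_\lambda^{H_1}=\dim V_\lambda^{H_2}$ for every $\lambda$, so $M_1$ and $M_2$ have the same discrete spectrum with multiplicities.

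For the scattering determinant I would invoke Section~\ref{section:eisenstein}. The hypothesis that $M_1$ and $M_2$ have the same number of cusps as $M_0$ forces (by a pigeonhole argument, or via Corollary~\ref{cuspsstaycusps}) every cusp of $M_0$ to remain a cusp in each $M_i$; combined with the equal covering degree established above, the corollary to Theorem~\ref{sameeis} yields $E_{M_1,c}=E_{M_0,c}=E_{M_2,c}$ for every cusp $c$. Comparing constant terms shows the scattering matrices, and hence the scattering determinants $\varphi_1(s)$ and $\varphi_2(s)$, are identically equal, so in particular they share poles and multiplicities; with the previous paragraph this establishes that $M_1$ and $M_2$ are isospectral. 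For the complex length spectrum I would use the standard Sunada mechanism: the isomorphism $\mathbb{C}[G/H_1]\cong\mathbb{C}[G/H_2]$ means each $g\in G$ has the same cycle type on $G/H_1$ as on $G/H_2$. A closed geodesic of $M_0$ with complex length $\ell$ and holonomy class $[g]_G$ lifts to $M_i$ as a family of closed geodesics of complex lengths $m\ell$, one for each cycle of length $m$ of $g$ acting on $G/H_i$; equality of cycle types therefore produces a complex-length-preserving bijection between the closed geodesics of $M_1$ and those of $M_2$.

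Non-isometry is where the non-arithmeticity and minimality hypotheses enter. If $M_1$ and $M_2$ were isometric, then by Mostow--Prasad rigidity some $g\in\mathrm{Isom}({\bf H}^3)$ would conjugate $\Gamma_1$ onto $\Gamma_2$; as $\Gamma_1,\Gamma_2$ are finite-index subgroups of $\Gamma$, this $g$ commensurates $\Gamma$ and so lies in $\mathrm{Comm}(\Gamma)=\Gamma$. Applying $\rho$ to the relation $g\Gamma_1g^{-1}=\Gamma_2$ and using $\rho(\Gamma_i)=H_i$ (which holds because $\rho_0$ is surjective, so $\rho_0(\rho_0^{-1}(H_i))=H_i$) would give $\rho(g)H_1\rho(g)^{-1}=H_2$, contradicting the assumption that $H_1$ and $H_2$ are non-conjugate in $G$. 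Finally, each $M_i$ is non-arithmetic (being commensurable with $M$) and is not the minimal element of its commensurability class (its commensurator is $\mathrm{Comm}(\Gamma)=\Gamma\supsetneq\Gamma_i$), so \cite[Theorem 2.4]{GR} applies to show that $M_i$ has infinite discrete spectrum; equivalently one may deduce this for $M_0$ and lift along the finite cover $M_i\to M_0$ via Lemma~\ref{finitecovers}. I expect the main obstacle to be the continuous spectrum: Sunada's method and Lemma~\ref{PRlemma} control only the $L^2$-eigenfunctions, so the genuinely new ingredient in the cusped setting is the matching of scattering data, and it is exactly to make the Eisenstein-series comparison of Theorem~\ref{sameeis} available that the equal-number-of-cusps hypothesis must be imposed rather than extracting isospectrality for free from almost conjugacy.
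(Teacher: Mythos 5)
Your proposal is correct and follows essentially the same route as the paper's proof: the commensurator argument for non-isometry, Lemma \ref{PRlemma} for equality of the discrete spectra, Theorem \ref{sameeis} (together with the pigeonhole observation that the equal-cusp-number hypothesis forces every cusp of $M_0$ to remain a cusp of each $M_i$) for the Eisenstein series and scattering determinants, Sunada's counting argument for the complex length spectrum, and \cite[Theorem 2.4]{GR} for the infinitude of the discrete spectrum. The one place you improve on the paper's exposition is the application of Lemma \ref{PRlemma}: you correctly take $V$ to be $L^2_{disc}$ of the common $G$-cover $\widehat{M}={\bf H}^3/\ker\rho_0$, on which $G$ genuinely acts, whereas the paper writes $V=L^2_{disc}(M_0)$, a space carrying no natural $G$-action, so your reading repairs that small imprecision.
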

\begin{proof}

Our proof will largely follow the proof of the analogous result of Garoufalidis and Reid \cite[Theorem 3.1]{GR}.

We begin by proving that the manifolds $M_1$ and $M_2$ are non-isometric. Let $\Gamma_1,\Gamma_2$ be such that $M_1={\bf H}^3/\Gamma_1$ and $M_2={\bf H}^3/\Gamma_2$. If $M_1$ and $M_2$ are isometric then there exists $g\in\mathrm{Isom}({\bf H}^3)$ such that $g\Gamma_1 g^{-1}=\Gamma_2$. Such an element $g$ necessarily lies in the commensurator $\mathrm{Comm}(\Gamma)$ of $\Gamma$, and since $\Gamma=\mathrm{Comm}(\Gamma)$ we see that $g\in \Gamma$. By hypothesis there exists a surjective homomorphism $\rho: \Gamma \rightarrow G$. Projecting onto $G$ we see that $\rho(g) H_1 \rho(g)^{-1} = H_2$, which contradicts our hypothesis that $H_1$ and $H_2$ be non-conjugate.

To prove that $M_1$ and $M_2$ are isospectral we must show that their scattering determinants have the same poles with multiplicities and that they have the same discrete spectrum. Since $M_1$ and $M_2$ have the same covering degree over $M_0$, that their scattering determinants have the same poles with multiplicities follows immediately from Theorem \ref{sameeis}, which in fact shows that all of their Eisenstein series coincide. That $M_1$ and $M_2$ have the same discrete spectrum follows from Lemma \ref{PRlemma} with $k=\C$, $V=L^2_{disc}(M_0)$ and $\Delta$ the Laplacian.

That $M_1$ and $M_2$ have the same complex length spectra follows from the proof given by Sunada \cite[Section 4]{Sunada}.

That $M_1$ and $M_2$ have infinite discrete spectra follows from \cite[Theorem 2.4]{GR}.

\end{proof}


\section{Proof of Theorem \ref{maintheorem}}

In light of Theorems \ref{parta} and \ref{partb} it suffices to exhibit a non-arithmetic, $1$-cusped finite volume hyperbolic $3$-manifold $M$ which is the minimal element in its commensurability class and which admits $p$-reps onto $\PSL(2,7)$ and $\PSL(2,11)$. 

To prove this assertion, let $M$ be a hyperbolic $3$-manifold as in the previous paragraph and assume that $\pi_1(M)$ can be conjugated to lie in $\PSL(2,\mathcal O_k)$ for some number field $k$ whose degree is not divisible by $3$. (We will construct such a manifold below.) It follows from Theorem \ref{parta} that there exist infinitely many prime powers $q$ and covers $M_q$ of $M$ such that composing the inclusion $\pi_1(M_q)\hookrightarrow \pi_1(M)$ with the $p$-rep $\pi_1(M) \rightarrow \PSL(2,7)$ yields a $p$-rep and such that $M_q$ has at least $q+1$ cusps. 

We have seen that there is a surjective homomorphism $\rho: \pi_1(M_q)\rightarrow \PSL(2,7)$. It is well known that $\PSL(2,7)$ contains a pair of non-conjugate, almost conjugate subgroups of index $7$. Call these subgroups $H_1$ and $H_2$ and observe that since $|PSL(2,7)|=168$,  it must be that $H_1$ and $H_2$ have order $24$. Let $M_i = {\bf H}^3/\Gamma_i$ ($i=1,2$) be the manifold covers of $M_q$ associated to $H_1$ and $H_2$. 

Fix $i\in \{1,2\}$ and let $[d]$ be a cusp of $\Gamma_i$. Let $P_i=\mathrm{Stab}_{\Gamma_i}(d)$ and $P=\mathrm{Stab}_{\pi_1(M_q)}(d)$. Because the homomorphism $\rho: \pi_1(M_q)\rightarrow \PSL(2,7)$ is a $p$-rep, $\rho(P)$ is a cyclic subgroup of $\PSL(2,7)$ of order $7$. Since $H_i$ has order $24$ it must be that $\rho(P)\cap H_i$ is trivial. In particular it follows that $\rho(P_i)=1$ and consequently that $[\pi_1(M_q):\Gamma_i] = 7 = [P:P_i]$. Corollary \ref{cuspsstaycusps} now implies that every cusp of $M_q$ remains a cusp of $M_i$. In particular this shows that $M_1$ and $M_2$ both have the same number of cusps as $M_q$, and this number can be made arbitrarily large by taking the prime power $q$ (from Theorem \ref{parta}) to be arbitrarily large. Theorem \ref{maintheorem} now follows from Theorem  \ref{partb}.

We now construct a non-arithmetic, $1$-cusped finite volume hyperbolic $3$-manifold $M$ which is the minimal element in its commensurability class and which admits $p$-reps onto $\PSL(2,7)$ and $\PSL(2,11)$. We will additionally show that $\pi_1(M)$ can be conjugated to lie in $\PSL(2,\mathcal O_k)$ where $k$ is a number field of degree $8$.

\begin{figure}[h!]
  \includegraphics[width=2in]{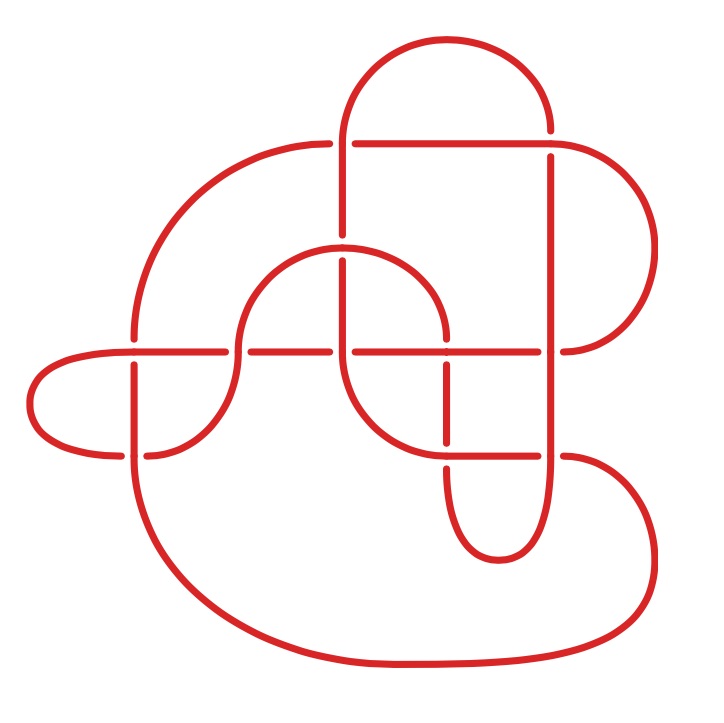}
  \caption{The knot K11n116.}
  \label{fig:k11n116}
\end{figure}

To that end, let $K$ be the knot K11n116 of the Hoste-Thistlethwaite table shown in Figure 1. The manifold $M=S^3\setminus K={\bf H}^3/\Gamma$ has $1$ cusp, volume $7.7544537602\cdots$ and invariant trace field $k=\Q(t)$ where $t=0.00106 + 0.9101192i$ is a root of the polynomial $x^8 - 2x^7 - x^6 + 4x^5 - 3x^3 + x + 1$. It was proven in \cite{GHH} that $M$ is the minimal element in its commensurability class (i.e., that $\Gamma = \mathrm{Comm}(\Gamma)$ where $\mathrm{Comm}(\Gamma)$ denotes the commensurator of $\Gamma$). The work of Margulis \cite{M} shows that this implies $M$ must be non-arithmetic. Moreover, a computation in Snap \cite{Snap} shows that $\Gamma$ has presentation \[\Gamma = \langle a,b,c \text{ } | \text{ }aaCbAccBB, \text{ }aacbCbAAB\rangle,\] and peripheral structure \[\mu = CbAcb, \qquad \lambda = AAbCCbacb.\] Here $A=a^{-1}, B=b^{-1}, C=c^{-1}$. In terms of matrices, we may represent $\Gamma$ as a subgroup of $\PSL(2,\mathcal O_k)$ via 

\[ a = \begin{pmatrix}-t^2+t-1 & t^7-3t^6+4t^5-t^4+t^2-t \\ -t^2+t-1  & 0\end{pmatrix},\]

\[ b = \begin{pmatrix}-t^7+2t^6-2t^5-3t^3+2t^2-3t-1 & t^6-2t^5+t^4+3t^3-2t^2+3t+2 \\ -t^7+3t^6-5t^5+4t^4-4t^3+2t^2-2t-1 & t^7-3t^6+5t^5-4t^4+4t^3-t^2+t+2\end{pmatrix},\]

and 

\[ c = \begin{pmatrix} -t^6+4t^5-8t^4+7t^3-5t^2-t & -2t^7+7t^6-14t^5+15t^4-12t^3+t^2+3t-1\\ t^5-3t^4+4t^3-3t^2+t & -t^7+4t^6-9t^5+11t^4-9t^3+3t^2+t-2\end{pmatrix}.\]

We now show that $\Gamma$ admits $p$-reps onto $\PSL(2,7)$ and $\PSL(2,11)$. We begin by exhibiting the $p$-rep onto $\PSL(2,7)$. As the discriminant of $k$ is $156166337$, which is not divisible by $7$, we see that $7$ is unramified in $k/\Q$. Using SAGE \cite{SAGE} we find that $7\mathcal O_k = \mathfrak p_1 \mathfrak p_2\mathfrak p_3,$ where the inertia degrees of the $\mathfrak p_i$ are $1,2,5$. We note that the prime $\mathfrak p_1$ of norm $7$ is equal to the principal ideal $(t-1)$. Upon identifying $\mathcal O_k /\mathfrak p_1$ with ${\bf F}_7$ we obtain a homomorphism from $\Gamma$ to $\PSL(2,7)$ by reducing the matrix entries of $a,b,c$ modulo $\mathfrak p_1$. The images of $a,b,c$ in $\PSL(2,7)$ are represented by
\[a=\begin{pmatrix}6 & 1 \\ 6 & 0\end{pmatrix}, \qquad b=\begin{pmatrix}1 & 6 \\ 3 & 5\end{pmatrix}, \qquad c=\begin{pmatrix}3 & 4 \\ 0 & 5\end{pmatrix},\] while the images of $\mu,\lambda$ in $\PSL(2,7)$ are represented by the parabolic matrices
\[\mu = \begin{pmatrix}0 & 4 \\ 5 & 5\end{pmatrix},\qquad \lambda = \begin{pmatrix}2 & 5 \\ 1 & 3\end{pmatrix}.\] It remains only to show that the homomorphism we have defined, call it $\rho_7$, is surjective. Our proof of this will make use of the following easy lemma.

\begin{lemma}\label{gens}
Let $p$ be a prime. The group $\SL(2,p)$ is generated by the matrices \[T=\begin{pmatrix}1 & 1 \\ 0 & 1\end{pmatrix}, \qquad U=\begin{pmatrix}1 & 0 \\ 1 & 1\end{pmatrix}.\]
\end{lemma}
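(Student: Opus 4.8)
The plan is to prove this by row reduction, exploiting the fact that $p$ prime makes ${\bf F}_p$ a field. First I would record the effect of the relevant powers: for any integer $a$ one has $T^a=\begin{pmatrix}1 & a \\ 0 & 1\end{pmatrix}$ and $U^a=\begin{pmatrix}1 & 0 \\ a & 1\end{pmatrix}$, so that left multiplication by $T^a$ adds $a$ times the second row to the first, while left multiplication by $U^a$ adds $a$ times the first row to the second. Thus the subgroup $\langle T,U\rangle$ realizes precisely the two families of elementary row operations of this shape, and the whole argument reduces to showing that these operations suffice to transform an arbitrary element of $\SL(2,p)$ into the identity.

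Next, given $g=\begin{pmatrix}a & b \\ c & d\end{pmatrix}\in\SL(2,p)$, I would first arrange that the lower-left entry is nonzero. If $c=0$ then $ad=1$ forces $a\neq 0$, and replacing $g$ by $Ug=\begin{pmatrix}a & b \\ a & b+d\end{pmatrix}$ produces a matrix whose lower-left entry $a$ is nonzero. Hence, after left-multiplying by at most one factor of $U$, we may assume $c\neq 0$. Because ${\bf F}_p$ is a field, $c$ is then invertible; this invertibility is the only point at which the primality of $p$ is used.

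I would then clear the matrix in two steps. Multiplying on the left by $T^{(1-a)c^{-1}}$ changes the upper-left entry to $a+(1-a)c^{-1}c=1$, so the matrix becomes $\begin{pmatrix}1 & b' \\ c & d'\end{pmatrix}$ for some $b',d'$. Multiplying on the left by $U^{-c}$ then sends the lower-left entry to $c-c\cdot 1=0$, leaving an upper-triangular matrix $\begin{pmatrix}1 & b'' \\ 0 & d''\end{pmatrix}$. Since left multiplication by elements of $\SL(2,p)$ preserves the determinant, this matrix has determinant $1$, and combined with the upper-left entry being $1$ this forces $d''=1$; hence the reduced matrix equals $T^{b''}$.

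Finally, assembling these steps shows that a suitable product of powers of $T$ and $U$ carries $g$ to $T^{b''}\in\langle T,U\rangle$, and solving for $g$ then expresses $g$ itself as a product of powers of $T$ and $U$. Therefore $\langle T,U\rangle=\SL(2,p)$, as claimed. I do not anticipate any genuine obstacle here, as the argument is elementary Gaussian elimination; the only point needing care is the separate treatment of the case $c=0$, which is handled above by a single multiplication by $U$.
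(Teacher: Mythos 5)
Your proof is correct, but it takes a genuinely different route from the paper's. The paper deduces the lemma from the classical fact that $\SL(2,\Z)$ is generated by $S=\left(\begin{smallmatrix}0&-1\\1&0\end{smallmatrix}\right)$ and $T$: since $S=T^{-1}UT^{-1}$, the group $\SL(2,\Z)$ is already generated by $T$ and $U$, and one then reduces modulo $p$ --- a step which silently uses the surjectivity of the reduction map $\SL(2,\Z)\to\SL(2,\Z/p\Z)$, itself a standard but not completely trivial fact. Your Gaussian elimination argument instead works entirely inside $\SL(2,p)$: after at most one left multiplication by $U$ the lower-left entry is nonzero, the exponents $(1-a)c^{-1}$ and $-c$ make sense as integer powers of $T$ and $U$ (every element of $\bfF_p$ is an integer multiple of $1$, and $T^p=U^p=I$), and the determinant argument correctly forces the reduced matrix to be $T^{b''}$; solving for $g$ finishes the proof. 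What the paper's approach buys is brevity modulo standard facts, and it generalizes immediately to $\SL(2,\Z/n\Z)$ for composite $n$, where your argument breaks down because $c$ need not be invertible. What your approach buys is self-containedness: it needs neither the generation of $\SL(2,\Z)$ by $S,T$ nor the surjectivity of reduction --- indeed, combined with the paper's identity $S=T^{-1}UT^{-1}$, your computation essentially reproves that surjectivity for prime moduli. It also makes visible why the statement is special to prime fields: over $\bfF_{p^f}$ with $f>1$ the elementary matrices are no longer powers of $T$ and $U$, and indeed $\langle T,U\rangle$ is then a proper subgroup of $\SL(2,p^f)$.
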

\begin{proof}
The lemma follows from the fact that $\SL(2,\Z)$ is generated by the matrices in the lemma's statement. To see this, note that the usual generators of $\SL(2,\Z)$ are \[S=\begin{pmatrix}0 & -1 \\ 1 & 0\end{pmatrix},\qquad T=\begin{pmatrix}1 & 1 \\ 0 & 1\end{pmatrix},\] and $S=T^{-1}UT^{-1}$.\end{proof}

Surjectivity of our homomorphism $\rho_7: \Gamma \rightarrow \PSL(2,7)$ now follows from the fact that \[\begin{pmatrix}1 & 1 \\ 0 & 1\end{pmatrix}=\rho_7(b)^{-1}\rho_7(a)^{-2}\rho_7(b)^{-1}\rho_7(a)\rho_7(b)^{-1}\] and \[\begin{pmatrix}1 & 0 \\ 1 & 1\end{pmatrix}=\rho_7(c)\rho_7(a)^{-1}\rho_7(b)\rho_7(c)^2.\]

We have just shown that $\Gamma$ admits a $p$-rep onto $\PSL(2,7)$. We now show that $\Gamma$ admits a $p$-rep onto $\PSL(2,11)$ as well. In $k$ we have the factorization $11\mathcal O_k = \mathfrak p_1\mathfrak p_2\mathfrak p_3$ where the inertia degrees of the $\mathfrak p_i$ are $1,1,6$. We may assume without loss of generality that $\mathfrak p_1 = (t-4)$. Identifying $\mathcal O_k/\mathfrak p_1$ with ${\bf F}_{11}$ we see that the images in $\PSL(2,11)$ of $a,b,c$ are represented by the matrices \[a=\begin{pmatrix}9 & 6 \\ 9 & 0\end{pmatrix}, \qquad b=\begin{pmatrix}4 & 3 \\ 1 & 1\end{pmatrix}, \qquad c=\begin{pmatrix}10 & 1 \\ 6 & 4\end{pmatrix},\] while the images of $\mu,\lambda$ in $\PSL(2,11)$ are represented by the parabolic matrices
\[\mu = \begin{pmatrix}10 & 10 \\ 10 & 10\end{pmatrix},\qquad \lambda = \begin{pmatrix}10 & 0 \\ 6 & 10\end{pmatrix}.\] Finally, we show that our homomorphism $\rho_{11}: \Gamma \rightarrow \PSL(2,11)$ is surjective by applying Lemma \ref{gens}. To that end we simply note that  \[\begin{pmatrix}1 & 1 \\ 0 & 1\end{pmatrix}=\rho_{11}(a)^{-1}\rho_{11}(b)\rho_{11}(c)^{-1}\] and \[\begin{pmatrix}1 & 0 \\ 1 & 1\end{pmatrix}=\rho_{11}(c)\rho_{11}(a)^{2}.\]

This completes the proof of Theorem \ref{maintheorem}.


\end{document}